
\documentclass[twoside,11pt]{article}
\usepackage{mathrsfs}
\usepackage{amssymb}
\usepackage{}%
\usepackage{latexsym}
\usepackage{amsmath,amssymb,epsfig,subfigure}
\usepackage{amsthm,enumerate,verbatim}
\usepackage{amsfonts}
\usepackage{color}
\usepackage{amsmath}
\usepackage{graphicx}
\usepackage{epstopdf}
\usepackage{float}
\usepackage{hyperref}
\usepackage{multirow}
\usepackage{threeparttable}
\usepackage{algorithm}
\usepackage{algorithmic}
\usepackage{graphicx}
\usepackage[numbers,sort&compress]{natbib}
\usepackage[figuresright]{rotating}
\usepackage[figuresright]{rotating}

\allowdisplaybreaks[4]
\setcounter{MaxMatrixCols}{30}
\providecommand{\U}[1]{\protect\rule{.1in}{.1in}}
\providecommand{\U}[1]{\protect\rule{.1in}{.1in}}
\textheight 21 true cm \textwidth 15.5 true cm \oddsidemargin 0cm
\evensidemargin 0cm \baselineskip=25pt

\newcommand{\BE}{\begin{equation}}
	\newcommand{\EE}{\end{equation}}

\numberwithin{equation}{section}
\newtheorem{proposition}{Proposition}[section]
\newtheorem{theorem}[proposition]{Theorem}
\newtheorem{lemma}[proposition]{Lemma}
\newtheorem{corollary}[proposition]{Corollary}
\newtheorem{definition}[proposition]{Definition}
\newtheorem{remark}[proposition]{Remark}

\newtheorem{example}[proposition]{Example}

\newenvironment{breakablealgorithm}
{
	\begin{center}
		\refstepcounter{algorithm}
		\hrule height0.8pt depth0pt \kern2pt
		\renewcommand{\caption}[2][\relax]{
			{\raggedright\textbf{Algorithm~\thealgorithm} ##2\par}%
			\ifx\relax##1\relax 
			\addcontentsline{loa}{algorithm}{\protect\numberline{\thealgorithm}##2}%
			\else 
			\addcontentsline{loa}{algorithm}{\protect\numberline{\thealgorithm}##1}%
			\fi
			\kern2pt\hrule\kern2pt
			
		}
	}{
		\kern2pt\hrule\relax
	\end{center}
}


\date{}
\begin{document}
	\title{
		Restarted Nonnegativity Preserving Tensor Splitting Methods via Relaxed Anderson Acceleration for Solving Multilinear Systems}
	\author{Dongdong Liu \thanks{%
			School of Mathematics and Statistics, Guangdong University of Technology, Guangzhou, China.
			({ddliu@gdut.edu.cn}).} \and Ting Hu$^{*}$\and Xifu Liu\thanks {Corresponding author, 	
			School of Mathematical Sciences, Chongqing Normal University, Chongqing, China. ({lxf211@cqnu.edu.cn})}}
	\maketitle
	\begin{abstract}
		Multilinear systems play an important role in scientific calculations of practical problems. In this paper, we consider a tensor splitting method with a relaxed Anderson acceleration for solving the multilinear systems. The new method preserves nonnegativity for every iterative step and improves the existing ones. Furthermore, the convergence analysis of the proposed method is given. The new algorithm performs effectively for numerical experiments.
	\end{abstract}
	\textbf{Key words.} 	Tensor splitting, multilinear systems, Anderson acceleration, strong $\mathcal {M}$-tensor
	\textbf{MSC} 15A48 15A69 65F10 65H10
	
	\section{Introduction} \label{section1}
	Let $\mathcal {A}=(a_{i_1{i_2} \cdots {i_l}})$ be an order $l$ dimension $n$ tensor and ${\bf{b}} \in\mathbb{R}^n$. The multilinear systems were first given in \cite{wei2}:
	\begin{equation}\label{ax=b}
		\mathcal {A}{\bf{x}}^{l-1}={\bf{b}},
	\end{equation}
	where ${\bf{x}}$ is the unknown vector to be solved. The $i$-th entry of the vector $\mathcal {A}{\bf{x}}^{l-1}$ (see \cite{Qi3, QIBOOK}) is defined as follows:
	\begin{equation}\label{ax}
		{(\mathcal {A}{{\bf{x}}^{l - 1}})_i}  \equiv \sum\limits_{{i_2}, \cdots ,{i_l}=1}^n {{a_{i{i_2} \cdots {i_l}}}{x_{{i_2}}}}  \cdots {x_{{i_l}}},{\kern 1pt} {\kern 1pt} {\kern 1pt} {\kern 1pt} {\kern 1pt} {\kern 1pt} {\kern 1pt} i = 1,2,...,n,
	\end{equation}
	where $x_i$ denotes the $i$-th component of a vector ${\bf{x}}$. Since there are many applications such as tensor complementarity problems (e.g., see \cite{wei2,xie}), numerical partial differential equations (e.g., see \cite{wei2,lid2}) and Bellman equation (e.g., see \cite{bn, wang}), the nonlinear equation \eqref{ax=b} has attracted many researchers' attention, e.g., theoretical analysis of solutions  {(e.g., see \cite{wei2, lid1, liuaml})}, numerical algorithms such as splitting type methods (e.g., see \cite{llh, lid0}), {Gradient-based iterations (e.g., see \cite{wei2, xie, lid2, han, he, bai1,  liang, lv})}, dynamical approaches (e.g., see \cite{wang}) and so on.  The \textit{majorization matrix} of a {tensor $\mathcal {A}$ is denoted by $M(\mathcal {A})$ and
	$M(\mathcal {A})_{i_{1}i_{2}}=b_{i_{1}i_{2}...i_{2}}$ with $i_{1},i_{2}=1,2,...,n$.} For a vector ${\bf{y}}=(y_i)$, we define ${\bf{y}}^{[\frac{1}{l-1}]}=(y_1^{\frac{1}{l-1}},y_2^{\frac{1}{l-1}},...,y_n^{\frac{1}{l-1}})^{T}$. In \cite{llh}, several tensor splitting methods (TSM) were given via setting $\mathcal{A}=\mathcal{E}-\mathcal{F}$:
	\begin{equation}\label{sp0}
		{{\bf{x}}_{k}} = {(M{(\mathcal {E})^{ - 1}}\mathcal {F}{\bf{x}}_{k - 1}^{l - 1} + M{(\mathcal {E})^{ - 1}}{\bf{b}})^{[\frac{1}{{l - 1}}]}},
	\end{equation}
 The iterative solution of which is a fixed point of the following iterative function
	\begin{equation}\label{sp1}
		g_{\mathcal{E}}({{\bf{x}}}) = {(M{(\mathcal {E})^{ - 1}}\mathcal {F}{\bf{x}}^{l - 1} + M{(\mathcal {E})^{ - 1}}{\bf{b}})^{[\frac{1}{{l - 1}}]}},
	\end{equation}
	where $M{(\mathcal {E})^{ - 1}}\mathcal {F}$ is {\it{the iterative tensor}} of the splitting method \eqref{sp0}. {In order to accelerate the convergence of tensor splitting methods, a preconditioned splitting method was first proposed in \cite{com}, and then the preconditioned methods were further studied (e.g., see \cite{cui,cz,liuc}).} It is noted that tensor splitting type methods have a fine numerical performance in terms of the running time because of avoiding calculation of the Jacobian matrix (e.g., see \cite{liuc}). Therefore, an improved tensor splitting method combing with the efficient acceleration technique is still an attractive topic.
	\par
	Anderson acceleration is a powerful  extrapolation technique to improve the fixed-point iteration for solving the linear or nonlinear systems (e.g., see \cite{lai,aa,aa0}). Niu et al. \cite{du} gave an Anderson-Richardson  method for solving the equation \eqref{ax=b}. Though the given methods in \cite{du} perform well, it is a pity that there is no theoretical analysis yet to ensure the nonnegativity of the iteration when the multilinear systems are solved. By this motivation, we propose a tensor splitting method with the relaxed Anderson acceleration and the nonnegativity is preserved in every new iterative step for solving the multilinear systems with the coefficient tensor being a strong $\mathcal{M}$-tensor. Besides, the convergence analysis is given for preserving the nonnegativity. Furthermore, the proposed algorithm performs better than the existing preconditioned methods in numerical experiments.
	

	The rest of this paper is given as follows. We introduce some definitions and lemmas in Section \ref{pre}. Besides, the tensor splitting method with the relaxed Anderson acceleration is proposed in Section \ref{als}. Furthermore, the convergence analysis is considered in Section \ref{conana}. We report numerical experiments to present the efficient performance of the proposed method in Section \ref{numex}. The final section is the conclusion.
	
	\section{Preliminaries}\label{pre}
	{An order $l$ and dimension $n$ tensor $\mathcal{A}$ with $n^{l}$ elements is defined by}	
	{\small{$$\mathcal{A} = ( a_{ i_1 \cdots i_l } ), a_{ i_1 \cdots i_l } \in \mathbb{R}, i_j \in \langle n\rangle, j=1,\cdots ,l,$$}}
where $\mathbb{R}$ denotes the real field, $\langle n \rangle = \{ 1 , \cdots , n \}$ and $n$ is a positive integer. We use $\mathbb{R}^{[l,n]}$ to denote the set of all order $l$ dimension $n$ real tensors. If $l=1$, $\mathbb{R}^{[1,n]}$ {reduces to} $\mathbb{R}^{n}$, which is commonly used to denote the set of all dimension $n$ real vectors. More specifically, $\mathbb{R}_{+}^{n}$ ($\mathbb{R}_{++}^{n}$)
	is indicted as the set of all nonnegative (positive) vectors. Let ${{0}}$ denote a null number, a null vector, a null matrix or a null tensor. Let $\mathcal {A},~\mathcal {B}\in\mathbb{R}^{[l,n]}$. The notation $\mathcal {A}\ge \mathcal {B}$ $(>\mathcal {B})$ refers that every element of $\mathcal {A}$ is no less than (strictly  greater than) the corresponding one of $\mathcal {B}$. Furthermore, $||\cdot||$ denotes {the} 2-norm of a vector or a matrix.

	Qi \cite{Qi3} and Lim \cite{lim}  independently gave the definition of tensor eigenvalues and eigenvectors, i.e., a complex number $\lambda$  and a   nonzero complex vector $ {\bf{x}}$ are called as an eigenvalue and an eigenvector 	of $\mathcal{A}\in \mathbb{R}^{[l,n]}$, respectively, if the following equation holds:
	\begin{equation*}
		\mathcal{A}{\bf{x}}^{l-1}=\lambda {\bf{x}}^{[l-1]},
	\end{equation*}
	where ${\bf{x}}^{[l-1]}=(x_1^{l-1},...,x_n^{l-1})^{T}$. Besides, $(\lambda ,{\bf{x}})$ is called as an \textit{$H$-eigenpair} if both $\lambda $ and ${\bf{x}}$ are real. Furthermore, the spectral radius of $\mathcal {A}$ is signified by $\rho (\mathcal{A})=\max \{\left| \lambda \right| |\lambda \in \sigma (%
	\mathcal{A})\}$ with $\sigma (\mathcal{A})$ being the set of all eigenvalues of $\mathcal {A}$. We call $\mathcal{I}_{l}\in \mathbb{R}^{[l,n]}$ a \textit{unit tensor} if its entries are given {by}
	$${\delta _{{i_1} \cdots {i_l}}} = \left\{ \begin{array}{l}
		1,{\kern 1pt} {\kern 1pt} {\kern 1pt} {\kern 1pt} {\kern 1pt} {\kern 1pt} {\kern 1pt} {\kern 1pt} {\kern 1pt} {\kern 1pt} {\kern 1pt} {\kern 1pt} {\kern 1pt} {\kern 1pt} {\kern 1pt} {\kern 1pt} {i_1} =  \cdots  = {i_l}, \\
		0,{\kern 1pt} {\kern 1pt} {\kern 1pt} {\kern 1pt} {\kern 1pt} {\kern 1pt} {\kern 1pt} {\kern 1pt} {\kern 1pt} {\kern 1pt} {\kern 1pt} {\kern 1pt} {\kern 1pt} {\kern 1pt} \mbox{else.} \\
	\end{array} \right.$$
	 {$\mathcal {A}\in\mathbb{R}^{[l,n]}$} is a \textit{$\mathcal {Z}$-tensor} if its off-diagonal entries are non-positive. Besides, $\mathcal {A}$ is an \textit{$\mathcal {M}$-tensor} (see the works of \cite{wei1, Qi2}) if there exists a nonnegative tensor $\mathcal {B}$ and a positive real number $\eta  \ge \rho (\mathcal {B})$ such that $\mathcal {A}=\eta \mathcal {I}_{l}  -  \mathcal {B}.$
	Furthermore, if $\eta  > \rho (\mathcal {B})$, $\mathcal {A}$ is  a \textit{strong $\mathcal {M}$-tensor}. Let  $A\in {\mathbb{R}^{[2,n]}}$ and $\mathcal {B} \in {\mathbb{R}^{[k,n]}}$.  A product \cite{dec}  $\mathcal {C}=A\mathcal {B}\in \mathbb{R}^{[k,n]}$ is introduced by
	\begin{equation*}
		{c_{j{i_2} \cdots {i_k}}} = \sum\limits_{{j_2}=1}^{{n}} {{a_{j{j_2}}}{b_{{j_2}{i_2} \cdots {i_k}}}}.
	\end{equation*}
	
 In \cite{llh}, it is pointed out that if $\mathcal{A}$ is a left-nonsingular tensor if and only if $\mathcal{A}=M(\mathcal{A})\mathcal{I}_{l}$ and $M(\mathcal{A})$ is nonsingular. If $\mathcal{A}$ is a strong $\mathcal {M}$-tensor, then $M(\mathcal {A})$ is a nonsingular $\mathcal {M}$-matrix. Besides, the some concepts on tensor splittings are listed as follows.
	\begin{definition}\label{myspitting}\cite{llh}
		Let $\mathcal {A},{\kern 2pt}\mathcal{E},{\kern 2pt}\mathcal{F}\in \mathbb{R}^{[l,n]}$.
		\begin{description}
			\item[(1)] $\mathcal {A}=\mathcal {E}-\mathcal {F}$ is a splitting of $\mathcal {A}$ if and only if $\mathcal {E}$ is a left-nonsingular.
			\item[(2)] $\mathcal {A}=\mathcal {E}-\mathcal {F}$ is a regular splitting of $\mathcal {A}$ if and only if $\mathcal {E}$ is left-nonsingular with $M(\mathcal {E})^{-1}\ge 0$, and $\mathcal {F}\ge 0$.
			\item[(3)]  $\mathcal {A}=\mathcal {E}-\mathcal {F}$ is a weak regular splitting of $\mathcal {A}$ if and only if $\mathcal {E}$ is left-nonsingular with $M(\mathcal {E})^{-1}\ge 0$ and $M(\mathcal {E})^{-1}\mathcal {F}\ge 0$.
			\item[(4)] $\mathcal {A}=\mathcal {E}-\mathcal {F}$ is a convergent splitting if and only if $\rho (M(\mathcal {E})^{{ - 1}}\mathcal {F}) < 1$.
		\end{description}
		
	\end{definition}
	\begin{remark}
	{{ From the works of \cite{llh, Qi2},
if $\mathcal {A}$ is a $\mathcal {Z}$-tensor, the following terms are equivalent:
\begin{description}
	\item[(1)] $\mathcal {A}$ is a strong $\mathcal {M}$-tensor.
	\item[(2)] $\mathcal {A}$ has a convergent (weak) regular splitting.
	\item[(3)] All the (weak) regular splittings of $\mathcal {A}$ are convergent.
		\item[(4)] All the $H$-eigenvalue of $\mathcal {A}$ are positive.
			\item[(5)] All the real part of each eigenvalue of $\mathcal {A}$ is positive.
\end{description}}}

	\end{remark}
	{For more spectral structure and property of a strong $\mathcal{M}$-tensor, please refer to the works of \cite{Qi2}. The following lemmas on multilinear systems in the existing works will be used in Section \ref{conana}.}
	\begin{lemma}\label{re1}\cite{wei2}
		Let $\mathcal {A}$ be a strong $\mathcal {M}$-tensor and ${\bf{b}}\in \mathbb{R}^{n}_{++}$. Thus, the multilinear systems \eqref{ax=b} have a unique positive solution.
	\end{lemma}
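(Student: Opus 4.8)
The plan is to prove the statement in two independent parts: (i) existence of a positive solution, via a monotone fixed-point iteration, and (ii) uniqueness among positive vectors, via a ratio/comparison argument that uses only the $\mathcal{Z}$-structure of $\mathcal{A}$. For (i), write $\mathcal{A}=\eta\mathcal{I}_{l}-\mathcal{B}$ with $\mathcal{B}\ge 0$ and $\eta>\rho(\mathcal{B})$, as in the definition of a strong $\mathcal{M}$-tensor. Then \eqref{ax=b} is equivalent to the fixed-point equation $\bx=T(\bx)$, where $T(\bx):=\bigl(\eta^{-1}\mathcal{B}\bx^{l-1}+\eta^{-1}\bb\bigr)^{[\frac{1}{l-1}]}$; this is exactly the splitting map $g_{\mathcal{E}}$ of \eqref{sp1} for the (regular) splitting $\mathcal{E}=\eta\mathcal{I}_{l}$, $\mathcal{F}=\mathcal{B}$. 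Since $\mathcal{B}\ge 0$ and $t\mapsto t^{1/(l-1)}$ is continuous and increasing on $[0,\infty)$, the map $T$ is continuous and monotone nondecreasing on $\IR^{n}_{+}$. Starting from $\bx^{(0)}=0$ one has $\bx^{(1)}=T(0)=(\eta^{-1}\bb)^{[\frac{1}{l-1}]}>0$ because $\bb>0$, and by monotonicity the iterates $\bx^{(k+1)}=T(\bx^{(k)})$ form a nondecreasing sequence.

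To bound this sequence from above I would use that a strong $\mathcal{M}$-tensor admits a positive vector $\bu$ with $\mathcal{A}\bu^{l-1}>0$ (the semipositivity characterization; a consequence of $\eta>\rho(\mathcal{B})$ and Perron--Frobenius theory for nonnegative tensors, cf. \cite{Qi2}). Choosing $t>0$ large enough that $t^{l-1}\mathcal{A}\bu^{l-1}\ge\bb$ and putting $\bw:=t\bu>0$, positive homogeneity gives $\mathcal{A}\bw^{l-1}\ge\bb$, which rewrites as $T(\bw)\le\bw$. Since $0\le\bw$ and $T$ is monotone, induction yields $\bx^{(k)}\le\bw$ for every $k$. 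A bounded monotone sequence in $\IR^{n}$ converges, say to $\bx^{*}\in[\bx^{(1)},\bw]$, and by continuity of $T$ we get $\bx^{*}=T(\bx^{*})$, i.e. $\mathcal{A}(\bx^{*})^{l-1}=\bb$; moreover $\bx^{*}\ge\bx^{(1)}>0$, so $\bx^{*}\in\IR^{n}_{++}$.

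For uniqueness, suppose $\bx,\by\in\IR^{n}_{++}$ both solve \eqref{ax=b}, and set $\alpha=\min_{i}x_{i}/y_{i}$ and $\beta=\max_{i}x_{i}/y_{i}$, so that $\alpha\by\le\bx\le\beta\by$ componentwise. Using $\mathcal{B}\ge 0$ and $\bx\le\beta\by$ one has $\mathcal{B}\bx^{l-1}\le\beta^{l-1}\mathcal{B}\by^{l-1}=\beta^{l-1}(\eta\by^{[l-1]}-\bb)$, hence
\[
\bb=\eta\bx^{[l-1]}-\mathcal{B}\bx^{l-1}\ \ge\ \eta\bx^{[l-1]}-\beta^{l-1}\eta\by^{[l-1]}+\beta^{l-1}\bb .
\]
Evaluating at an index $j$ with $x_{j}=\beta y_{j}$ annihilates $\eta(x_{j}^{l-1}-\beta^{l-1}y_{j}^{l-1})$ and leaves $(1-\beta^{l-1})b_{j}\ge 0$; since $b_{j}>0$ this forces $\beta\le 1$. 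Symmetrically, from $\alpha\by\le\bx$ one obtains $(1-\alpha^{l-1})b_{k}\le 0$ at an index $k$ with $x_{k}=\alpha y_{k}$, hence $\alpha\ge 1$. Together with $\alpha\le\beta$ this gives $\alpha=\beta=1$, i.e. $\bx=\by$.

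The step I expect to be the main obstacle is the boundedness in the existence part: one must produce a positive ``supersolution'' $\bw$ with $\mathcal{A}\bw^{l-1}\ge\bb$, equivalently a positive $\bu$ with $\mathcal{A}\bu^{l-1}>0$, and this is precisely where the strength of the $\mathcal{M}$-tensor hypothesis ($\eta>\rho(\mathcal{B})$, via Perron--Frobenius/semipositivity for strong $\mathcal{M}$-tensors) is genuinely used; the remaining arguments are monotonicity bookkeeping and elementary manipulations of $\mathcal{A}=\eta\mathcal{I}_{l}-\mathcal{B}$. An alternative would be to replace the monotone iteration by a Brouwer fixed-point argument on a compact convex set, or to derive an a priori norm bound on any nonnegative solution, but both routes still rely on the same spectral input.
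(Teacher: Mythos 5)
Your proof is correct, but note that the paper itself does not prove this lemma at all: it is imported verbatim from the cited reference \cite{wei2}, so there is no internal proof to match. Your existence argument is essentially the classical route also taken in \cite{wei2}: rewrite $\mathcal{A}=\eta\mathcal{I}_{l}-\mathcal{B}$, run the monotone splitting-type iteration $\bx^{(k+1)}=T(\bx^{(k)})$ from $\bx^{(0)}=0$, and trap it below a supersolution $\bw=t\bu$ built from a semipositivity vector $\bu>0$ with $\mathcal{A}\bu^{l-1}>0$; that characterization of strong $\mathcal{M}$-tensors is indeed available in \cite{wei1,Qi2}, and this is the one place where $\eta>\rho(\mathcal{B})$ is genuinely used (if you insist on deriving it directly from Perron--Frobenius you need a small perturbation of $\mathcal{B}$ to a positive tensor to cover the reducible case, a standard remark rather than a gap). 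Your uniqueness argument, via the extremal ratios $\alpha=\min_i x_i/y_i$ and $\beta=\max_i x_i/y_i$ and evaluation at the indices where they are attained, is a genuinely different and arguably more elementary route than the treatment in the literature: it uses only the $\mathcal{Z}$-structure ($\mathcal{B}\ge 0$), positive homogeneity, and $\bb>0$, and in particular does not invoke the spectral condition at all, so it shows that uniqueness of positive solutions holds for any such splitting while the strong $\mathcal{M}$-tensor hypothesis is needed only for existence. Both halves check out (the key inequalities $b_j\ge\beta^{l-1}b_j$ and $b_k\le\alpha^{l-1}b_k$ are derived correctly), so the proposal stands as a complete, self-contained proof of the lemma.
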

	\begin{lemma}\cite{llh}\label{invmethid}
		Let $\mathcal {A}$ be a strong $\mathcal {M}$-tensor and ${\bf{b}}\in\mathbb{R}^{n}_{++}$. Thus, there always exists a ${\bf{x}}_{0}\in \mathbb{R}^{n}_{++}$ with ${\bf{0}}<\mathcal {A}{{\bf{x}}_{0}^{l-1}}\le {\bf{b}}$.
	\end{lemma}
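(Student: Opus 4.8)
The plan is to reduce the lemma, via the positive homogeneity of degree $l-1$ of the map $\mathbf{x}\mapsto\mathcal{A}\mathbf{x}^{l-1}$, to the single auxiliary claim that \emph{there exists a vector $\mathbf{v}\in\mathbb{R}^{n}_{++}$ with $\mathcal{A}\mathbf{v}^{l-1}\in\mathbb{R}^{n}_{++}$}. Granting this, write $c_i:=(\mathcal{A}\mathbf{v}^{l-1})_i>0$ and choose a scalar $t>0$ so small that $t^{l-1}\le\min_{1\le i\le n}b_i/c_i$; the bound is a positive number because $\mathbf{b}>\mathbf{0}$ and each $c_i>0$. Setting $\mathbf{x}_0:=t\mathbf{v}\in\mathbb{R}^{n}_{++}$ and using $\mathcal{A}(t\mathbf{v})^{l-1}=t^{l-1}\mathcal{A}\mathbf{v}^{l-1}$ gives $\mathbf{0}<\mathcal{A}\mathbf{x}_0^{l-1}\le\mathbf{b}$ componentwise, which is exactly the assertion. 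This scaling step is routine; all the substance sits in the auxiliary claim.

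To prove the claim I would use the defining decomposition of a strong $\mathcal{M}$-tensor together with Perron--Frobenius theory for nonnegative tensors. Write $\mathcal{A}=\eta\mathcal{I}_l-\mathcal{B}$ with $\mathcal{B}\ge0$ and $\eta>\rho(\mathcal{B})$, and perturb to $\mathcal{B}_\varepsilon:=\mathcal{B}+\varepsilon\mathcal{J}$, where $\mathcal{J}$ is the order-$l$ dimension-$n$ tensor with all entries equal to $1$ and $\varepsilon>0$. Then $\mathcal{B}_\varepsilon$ has all entries positive, hence is irreducible, so by Perron--Frobenius $\rho(\mathcal{B}_\varepsilon)$ is an $H$-eigenvalue with a positive eigenvector $\mathbf{v}_\varepsilon\in\mathbb{R}^{n}_{++}$, i.e. $\mathcal{B}_\varepsilon\mathbf{v}_\varepsilon^{l-1}=\rho(\mathcal{B}_\varepsilon)\mathbf{v}_\varepsilon^{[l-1]}$. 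Since the spectral radius of a nonnegative tensor depends continuously on its entries, $\rho(\mathcal{B}_\varepsilon)\to\rho(\mathcal{B})$ as $\varepsilon\downarrow0$, so I may fix $\varepsilon$ with $\rho(\mathcal{B}_\varepsilon)<\eta$. Substituting $\mathcal{B}\mathbf{v}_\varepsilon^{l-1}=\mathcal{B}_\varepsilon\mathbf{v}_\varepsilon^{l-1}-\varepsilon\mathcal{J}\mathbf{v}_\varepsilon^{l-1}=\rho(\mathcal{B}_\varepsilon)\mathbf{v}_\varepsilon^{[l-1]}-\varepsilon\mathcal{J}\mathbf{v}_\varepsilon^{l-1}$ into $\mathcal{A}\mathbf{v}_\varepsilon^{l-1}=\eta\mathbf{v}_\varepsilon^{[l-1]}-\mathcal{B}\mathbf{v}_\varepsilon^{l-1}$ yields
\[
\mathcal{A}\mathbf{v}_\varepsilon^{l-1}=\bigl(\eta-\rho(\mathcal{B}_\varepsilon)\bigr)\mathbf{v}_\varepsilon^{[l-1]}+\varepsilon\,\mathcal{J}\mathbf{v}_\varepsilon^{l-1}>\mathbf{0},
\]
since both summands are strictly positive; thus $\mathbf{v}:=\mathbf{v}_\varepsilon$ works. (One may skip this entirely by invoking the characterization of strong $\mathcal{M}$-tensors among $\mathcal{Z}$-tensors in \cite{Qi2}, one of whose equivalent forms is precisely the existence of such a $\mathbf{v}$.)

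I expect the main obstacle to be exactly the auxiliary claim, and inside it the two nonelementary ingredients borrowed from the Perron--Frobenius theory of nonnegative tensors: that an everywhere-positive tensor has a strictly positive Perron eigenvector, and that the spectral radius varies continuously with the tensor entries. Both are standard, so in a full write-up I would simply cite them. For completeness I would also record the short alternative route: by Lemma~\ref{re1} the system $\mathcal{A}\mathbf{x}^{l-1}=\mathbf{b}$ has a unique solution $\mathbf{x}_0\in\mathbb{R}^{n}_{++}$, and then trivially $\mathbf{0}<\mathcal{A}\mathbf{x}_0^{l-1}=\mathbf{b}\le\mathbf{b}$; I would nonetheless keep the constructive argument, since producing an explicit positive sub-solution $\mathbf{x}_0$ without first solving the system is what makes the lemma useful as an initialization for the monotone iterations considered later.
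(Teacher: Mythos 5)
Your proof is correct. Note that the paper itself offers no proof of this lemma: it is quoted verbatim from \cite{llh}, and the argument there is exactly your first paragraph — take a positive vector $\mathbf{v}$ with $\mathcal{A}\mathbf{v}^{l-1}>\mathbf{0}$ (semi-positivity, which for $\mathcal{Z}$-tensors is one of the known equivalent characterizations of a strong $\mathcal{M}$-tensor, see \cite{wei1,Qi2}) and rescale by $t$ with $t^{l-1}\le\min_i b_i/(\mathcal{A}\mathbf{v}^{l-1})_i$, using the degree-$(l-1)$ homogeneity. So your main route coincides with the standard one; what you add is a self-contained proof of the auxiliary semi-positivity claim via the perturbation $\mathcal{B}_\varepsilon=\mathcal{B}+\varepsilon\mathcal{J}$. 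That argument is sound, but be aware it genuinely leans on two nontrivial imported facts — the existence of a strictly positive Perron eigenvector for an entrywise positive tensor and the continuity of the spectral radius in the entries (the latter cannot be replaced by a crude row-sum bound, since the row sums of $\mathcal{B}$ need not lie below $\eta$) — so in a write-up these must be cited precisely; citing the characterization in \cite{Qi2} directly, as you suggest, is the shorter and cleaner option. Your remark that Lemma~\ref{re1} already yields the statement trivially (take $\mathbf{x}_0$ to be the exact solution) is logically valid but, as you say, misses the point of the lemma, which is to produce an initial sub-solution without solving \eqref{ax=b}; keeping the constructive scaling argument is the right call.
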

	\section{Tensor splitting methods with Anderson acceleration}\label{als}
	Let $f_{\mathcal{E}}({\bf{x}})=g_{\mathcal{E}}({\bf{x}})-{\bf{x}}.$ The tensor splitting methods with the relaxed Anderson acceleration is proposed  for solving the multilinear systems \eqref{ax=b} as follows.
	\begin{breakablealgorithm}\label{TSMAA}
		\caption{Restarted tensor splitting methods with the relaxed Anderson acceleration(RTSMRAA)}
		\begin{algorithmic}[1]
			\STATE{Require: Given a positive vector ${\bf{b}}$, a strong $\mathcal {M}$-tensor $\mathcal {A}$ with its (weak) regular splitting $\mathcal {A}=\mathcal {E}-\mathcal {F}$, maximum $k_{{\bf{max}}}$, a positive initial vector ${\bf{z}}_0$ and $m\ge 1$.}
			\STATE{Update ${\bf{z}}_1=g_{\mathcal{E}}({\bf{z}}_0)$, ${\boldsymbol{\mu}}_1={\bf{z}}_1$,  ${\bf{y}}_1={\bf{z}}_1$ and $f_0={\bf{z}}_1-{\bf{z}}_0$}
			\FOR{k=1,2,..., $k_{{\bf{max}}}$}
			\STATE{Set $m_k=\min\{m,k\}$}
			\STATE{Update
				\begin{equation}\label{it1}
					\begin{split}
						{\boldsymbol{\mu}}_{k+1}&=g_{\mathcal{E}}({\bf{z}}_k),\\
						f_k&={\boldsymbol{\mu}}_{k+1}-{\bf{z}}_k.
					\end{split}
				\end{equation}
			}
			\STATE{Set $Q_k=(f_{k-m_k},...,f_k)$.}
			\STATE{Determine $\boldsymbol{\alpha}^{(k)}=(\alpha_{0}^{(k)},...,\alpha_{m_k}^{(k)})^{T}$ that solves
				\begin{equation}\label{sq2}
					\min_{\alpha=(\alpha_0,...,\alpha_{m_k})^{T}} {||Q_k\alpha||}~\textit{s.t.}~ \sum\limits_{i=0}^{m_k}\alpha_i =1.
				\end{equation}	
				\STATE{Update
					$$	{\bf{y}}_{k+1}=\sum_{i=0}^{m_k}\alpha_{i}^{(k)}g_{\mathcal{E}}(z_{k-m_k+i}).$$
				}
			}				
			\IF{${\bf{y}}_{k+1}\ge 0$ and $\sum\limits_{i=0}^{m_k}|\alpha_i^{(k)}|\le \kappa_{\boldsymbol{\alpha}}$}
			\STATE{Choose $\theta_{k}\in[0,1]$ to update ${\bf{z}}_{k+1}=\theta_k{\bf{y}}_{k+1}+(1-\theta_k){{\boldsymbol{\mu}}}_{k+1}$.}
			\ELSE
			\STATE{Update ${\bf{z}}_{k+1}=\boldsymbol{\mu}_{k+1}$.}
			\ENDIF	
			\STATE{Update $k\leftarrow k+1$.}
			\ENDFOR
			\RETURN ${\bf{z}}_{k}$
		\end{algorithmic}
	\end{breakablealgorithm}
	\begin{remark}\label{re}
		The optimization problem \eqref{sq2} can be solved by the following equivalent the square problem\cite{aa0}:
		Give $\boldsymbol{\zeta}^{(k)}=(\zeta_0^{(k)},\zeta_1^{(k)},...,\zeta_{m_k-1}^{(k)})^{T}$ by solving
		\begin{equation}\label{sq}
			\min_{\boldsymbol{\zeta}^{(k)}=(\zeta_0^{(k)},\zeta_1^{(k)},...,\zeta_{m_k-1}^{(k)})^{T}}{||f_k- F_k\boldsymbol{\zeta}^{(k)}||},
		\end{equation}
		where $ F_{k}=(\Delta f_{k-m_k},...,\Delta f_{k-1})$ and $\Delta f_i=f_{i+1}-f_i$.
		Furthermore, Step 8 in the RTSMRAA is written as follows:
		\begin{equation}\label{upaa}
			{\bf{y}}_{k+1}={\boldsymbol{\mu}}_{k+1}- G_{k}\boldsymbol{\zeta}^{(k)},
		\end{equation}
		where $G_{k}=(\Delta g_{k-m_k},...,\Delta g_{k-1})$ and $\Delta g_i=g_{\mathcal{E}}({\bf{z}}_{i+1})-g_{\mathcal{E}}({\bf{z}}_{i}).$
		The solution $\boldsymbol{\zeta}$ of the least square problem \eqref{sq} and the solution $\alpha$ of the optimization problem \eqref{sq2} are related by $\alpha_{0}=\zeta_0, \alpha_i=\zeta_i-\zeta_{i-1}$ for $i=1,...,m_k-1$ and $\alpha_{m_k}=1-\zeta_{m_k-1}$.
	\end{remark}

	\section{Convergence analysis}\label{conana}

	In this section, we will consider the convergence analysis of Algorithm \ref{TSMAA}. Let $\mathbb{U}_{\delta}\equiv\{{\bf{x}}\big{|}||{\bf{x}}-{\bf{x}}^{*}||\le \delta,\delta>0\}$. We write $a_{i_1i_2\cdots i_l}$ as $a_{i_1\pi}$, where $\pi=i_2\cdots i_l$.
	Notice that for any $\mathcal{A}=(a_{i_1i_2\cdots i_l})\in{\mathbb{R}}^{[l,n]}$, it is easy to check that
	\begin{equation*}
		{\mathcal {A}}{\bf{x}}^{l-1}=\overline{\mathcal {A}}{\bf{x}}^{l-1}
	\end{equation*}
	where $\overline{\mathcal{A}}=(\overline{a}_{i_1i_2\cdots i_l})$ is given by:
	\begin{equation*}
		{\overline{a}_{ii_2\cdots i_l}} = \frac{1}{(l - 1)!}\sum\limits_{\pi \in \mathbb{G}} {{a_{i\pi}}},
	\end{equation*}
	and $\mathbb{G}$ denotes the set of all permutations in $({i_2} \cdots {i_l})$. Then $\overline{\mathcal{A}}$ is symmetric on the last $l-1$ indices ($\overline{\mathcal{A}}$ is called semi-symmetric\cite{semi}). For the sake of clarity, we always denote the semi-symmetric of a tensor by the notation `$\overline{{\cdot}}$' (e.g., $\overline{\mathcal{A}}$). Firstly, we give some lemmas as follows.
	\begin{lemma}\label{lemma0}
		Let  $\mathcal {A}$ be a strong $\mathcal {M}$-tensor, ${\bf{b}}\in \mathbb{R}^{n}_{++}$ and $\mathcal {A}=\mathcal {E}-\mathcal {F}$ be a (weak) regular splitting. Then the function $g_{\mathcal{E}}({\bf{x}})$  has the following properties:
		\begin{description}
			\item[(1)] $g_{\mathcal{E}}({\bf{x}})\in \mathbb{R}^{n}_{++}$ for all ${\bf{x}}\in  \mathbb{R}^{n}_{+}$.
			\item[(2)] $g_{\mathcal{E}}({\bf{x}})$  is  monotone increasing in $\mathbb{R}^{n}_{+}$, i.e., for any $0\le{\bf{x}}\le{\bf{y}}$, $g_{\mathcal{E}}({\bf{x}})\le g_{\mathcal{E}}({\bf{y}})$.
		\end{description}
	\end{lemma}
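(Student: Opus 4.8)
The plan is to rewrite the iteration map as $g_{\mathcal{E}}({\bf{x}})=(\mathcal{C}{\bf{x}}^{l-1}+{\bf{d}})^{[\frac{1}{l-1}]}$, where $\mathcal{C}:=M(\mathcal{E})^{-1}\mathcal{F}$ is the iterative tensor of the splitting and ${\bf{d}}:=M(\mathcal{E})^{-1}{\bf{b}}$, and then to read off both properties from the sign structure guaranteed by the (weak) regular splitting together with the nonsingularity of $M(\mathcal{E})$ (which holds since $\mathcal{E}$ is left-nonsingular, so $M(\mathcal{E})$ is nonsingular).

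For part (1): by Definition~\ref{myspitting}, a (weak) regular splitting yields $M(\mathcal{E})^{-1}\ge 0$ and $M(\mathcal{E})^{-1}\mathcal{F}\ge 0$ (in the regular case the latter follows from $\mathcal{F}\ge 0$), hence $\mathcal{C}\ge 0$. For ${\bf{x}}\in\mathbb{R}^{n}_{+}$ every product $x_{i_2}\cdots x_{i_l}$ is nonnegative, so by \eqref{ax} each entry of $\mathcal{C}{\bf{x}}^{l-1}$ is a sum of nonnegative terms, i.e. $\mathcal{C}{\bf{x}}^{l-1}\ge {\bf{0}}$. The key point is that ${\bf{d}}>{\bf{0}}$ strictly: since $M(\mathcal{E})$ is nonsingular, $M(\mathcal{E})^{-1}$ has no zero row, so each row of the nonnegative matrix $M(\mathcal{E})^{-1}$ is a nonzero nonnegative vector, and its inner product with the strictly positive vector ${\bf{b}}$ is strictly positive. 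Therefore $\mathcal{C}{\bf{x}}^{l-1}+{\bf{d}}>{\bf{0}}$, and applying the componentwise map $t\mapsto t^{\frac{1}{l-1}}$, which sends $(0,\infty)$ into $(0,\infty)$, gives $g_{\mathcal{E}}({\bf{x}})\in\mathbb{R}^{n}_{++}$.

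For part (2): let $0\le{\bf{x}}\le{\bf{y}}$. Then $0\le x_{i_2}\cdots x_{i_l}\le y_{i_2}\cdots y_{i_l}$ for every multi-index, so since $\mathcal{C}\ge 0$ the defining formula \eqref{ax} gives $\mathcal{C}{\bf{x}}^{l-1}\le\mathcal{C}{\bf{y}}^{l-1}$ entrywise; adding ${\bf{d}}$ preserves the inequality, and since $t\mapsto t^{\frac{1}{l-1}}$ is nondecreasing on $[0,\infty)$, applying it componentwise yields $g_{\mathcal{E}}({\bf{x}})\le g_{\mathcal{E}}({\bf{y}})$. The argument is essentially bookkeeping with the order relations on tensors and vectors; the only subtlety I anticipate is in (1), where nonnegativity of $M(\mathcal{E})^{-1}$ alone would only give $g_{\mathcal{E}}({\bf{x}})\ge{\bf{0}}$, and one genuinely needs its invertibility (no vanishing row) together with ${\bf{b}}\in\mathbb{R}^{n}_{++}$ to upgrade this to strict positivity.
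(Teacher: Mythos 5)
Your proof is correct and follows essentially the same route as the paper: the paper also deduces monotonicity from the nonnegativity of the iterative tensor $M(\mathcal{E})^{-1}\mathcal{F}$ guaranteed by the (weak) regular splitting, and strict positivity from $M(\mathcal{E})^{-1}{\bf{b}}\in\mathbb{R}^{n}_{++}$. You merely spell out the details the paper leaves as "obvious," including the correct observation that the nonsingularity of $M(\mathcal{E})$ (no zero rows of the nonnegative inverse) is what upgrades $M(\mathcal{E})^{-1}{\bf{b}}\ge 0$ to strict positivity.
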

	\begin{proof}
		Since $\mathcal{A}=\mathcal{E}-\mathcal{F}$ is a (weak) regular splitting, the second assertion is obvious. Because ${\bf{b}}\in \mathbb{R}^{n}_{++}$, it is obtained that $M(\mathcal{E})^{-1}{\bf{b}}\in \mathbb{R}^{n}_{++}$. Thus, the first affirmation is proved.
	\end{proof}
	
	
	\begin{proposition}\label{lemma2}
		Under the assumption of Lemma \ref{lemma0}, for any initial vector ${\bf{z}}_0\in \mathbb{R}^{n}_{++}$ with ${{0}}<\mathcal {A}{{\bf{z}}_{0}^{l-1}}\le {\bf{b}}$, we get ${\bf{z}}_{k}\in \mathbb{R}^{n}_{++}$ generated by the {RTSMRAA} if $\theta_{k}\in[0,1)$ in Step 10 of the RTSMRAA for all $k\ge 1$.
	\end{proposition}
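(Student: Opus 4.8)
The plan is to argue by induction on $k$, showing that the key invariant
$$
{\bf z}_k \in \mathbb{R}^n_{++}, \qquad {\bf 0} < \mathcal{A}{\bf z}_k^{l-1} \le {\bf b}
$$
propagates from step to step, since the second half of the invariant is exactly what guarantees (via Lemma~\ref{lemma0}) that $g_{\mathcal{E}}$ behaves monotonically on the relevant vectors. The base case $k=0$ is the hypothesis on ${\bf z}_0$. For the inductive step, first I would record the consequences of the splitting identity: ${\bf 0} < \mathcal{A}{\bf z}_k^{l-1} \le {\bf b}$ together with $\mathcal{A}=\mathcal{E}-\mathcal{F}$ and $\mathcal{F}\ge 0$, $M(\mathcal{E})^{-1}\ge 0$, gives that $g_{\mathcal{E}}({\bf z}_k) = \bigl(M(\mathcal{E})^{-1}\mathcal{F}{\bf z}_k^{l-1} + M(\mathcal{E})^{-1}{\bf b}\bigr)^{[\frac{1}{l-1}]}$ is a positive vector (Lemma~\ref{lemma0}(1)) and, because $\mathcal{A}{\bf z}_k^{l-1}\le{\bf b}$ rearranges to $M(\mathcal{E})^{-1}\mathcal{F}{\bf z}_k^{l-1} + M(\mathcal{E})^{-1}{\bf b} \ge M(\mathcal{E})^{-1}\mathcal{E}{\bf z}_k^{l-1} = {\bf z}_k^{[l-1]}$, one also obtains $\boldsymbol{\mu}_{k+1} = g_{\mathcal{E}}({\bf z}_k) \ge {\bf z}_k$. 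Hence $\boldsymbol{\mu}_{k+1}$ is positive and pointwise at least ${\bf z}_k$; moreover a standard monotone-fixed-point argument (applying $\mathcal{A}$, or equivalently $M(\mathcal{E})$, to $g_{\mathcal{E}}({\bf z}_k)$ and using that $g_{\mathcal{E}}$ is increasing) shows $\mathcal{A}\boldsymbol{\mu}_{k+1}^{l-1} \le {\bf b}$ as well, so $\boldsymbol{\mu}_{k+1}$ itself satisfies the full invariant.

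Next I would split into the two branches of the algorithm's \textbf{if} statement. In the \textbf{else} branch one simply sets ${\bf z}_{k+1} = \boldsymbol{\mu}_{k+1}$, and the invariant is already established for $\boldsymbol{\mu}_{k+1}$, so nothing more is needed. In the \textbf{if} branch, ${\bf z}_{k+1} = \theta_k {\bf y}_{k+1} + (1-\theta_k)\boldsymbol{\mu}_{k+1}$ with $\theta_k\in[0,1)$ and with the branch condition guaranteeing ${\bf y}_{k+1}\ge {\bf 0}$. Positivity of ${\bf z}_{k+1}$ follows because it is a convex combination of the nonnegative vector ${\bf y}_{k+1}$ and the \emph{strictly} positive vector $\boldsymbol{\mu}_{k+1}$, and since $1-\theta_k>0$ the $\boldsymbol{\mu}_{k+1}$-part contributes a strictly positive amount in every coordinate. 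This is precisely where the hypothesis $\theta_k<1$ (rather than $\theta_k\le 1$) is used, and it is the subtle point of the statement.

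The part requiring the most care is showing that ${\bf z}_{k+1}$ also inherits the upper bound $\mathcal{A}{\bf z}_{k+1}^{l-1}\le{\bf b}$, because $\mathcal{A}(\cdot)^{l-1}$ is not linear, so one cannot just take a convex combination of inequalities. My plan here is to work with the semi-symmetric representative $\overline{\mathcal{A}}$ introduced before the lemma and exploit that each ${\bf y}_{k+1}$ is a convex combination $\sum_i \alpha_i^{(k)} g_{\mathcal{E}}({\bf z}_{k-m_k+i})$ of the previous $g_{\mathcal{E}}$-iterates — but note the $\alpha_i^{(k)}$ need not be nonnegative, only sum to one, so convexity of $\mathcal{A}(\cdot)^{l-1}$ is not directly available either. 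The cleaner route, which I expect the authors take, is to avoid estimating $\mathcal{A}{\bf z}_{k+1}^{l-1}$ coordinatewise and instead observe that the upper-bound half of the invariant is only ever needed to re-derive $\boldsymbol{\mu}_{k+2}\ge {\bf z}_{k+1}$ and positivity at the next step; so it suffices to show directly that $g_{\mathcal{E}}({\bf z}_{k+1})\ge {\bf z}_{k+1}$ and that $g_{\mathcal{E}}({\bf z}_{k+1})$ is positive. Positivity is immediate from Lemma~\ref{lemma0}(1) once ${\bf z}_{k+1}\ge {\bf 0}$ is known. For $g_{\mathcal{E}}({\bf z}_{k+1})\ge {\bf z}_{k+1}$ one uses monotonicity of $g_{\mathcal{E}}$ together with a bound ${\bf z}_{k+1}\le \boldsymbol{\mu}_{k+1} = g_{\mathcal{E}}({\bf z}_k)$; this last inequality would come from ${\bf y}_{k+1}\le \boldsymbol{\mu}_{k+1}$ (if it holds) or, more robustly, from keeping track of a dominating vector. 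I would therefore reformulate the induction hypothesis as: ${\bf z}_k\in\mathbb{R}^n_{++}$, $g_{\mathcal{E}}({\bf z}_k)\ge {\bf z}_k$, and ${\bf z}_k\le {\bf w}$ for a fixed vector ${\bf w}$ with $\mathcal{A}{\bf w}^{l-1}\le{\bf b}$ (e.g. the limit of the pure splitting iteration, or ${\bf x}^*$ itself), and then propagate this stronger statement; the monotonicity in Lemma~\ref{lemma0}(2) does all the work, and the strict positivity is secured at each step by the $(1-\theta_k)$-weighted $\boldsymbol{\mu}_{k+1}$ term exactly as above.
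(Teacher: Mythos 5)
Your first two paragraphs reproduce the paper's (very brief) argument, and they are correct: by induction, ${\bf z}_k\ge 0$ gives $\boldsymbol{\mu}_{k+1}=g_{\mathcal{E}}({\bf z}_k)\in\mathbb{R}^{n}_{++}$ by Lemma \ref{lemma0}, and ${\bf z}_{k+1}$ is either $\boldsymbol{\mu}_{k+1}$ itself (else-branch) or $\theta_k{\bf y}_{k+1}+(1-\theta_k)\boldsymbol{\mu}_{k+1}$ with the algorithm having already checked ${\bf y}_{k+1}\ge 0$ and with $1-\theta_k>0$, hence positive. That is the whole proof; the paper leaves even the convex-combination step implicit and only records ${\bf z}_0\le{\bf z}_1$ as a consequence of $\mathcal{A}{\bf z}_0^{l-1}\le{\bf b}$ (that hypothesis plays essentially no role in the positivity claim itself; it matters for the subsequent convergence analysis).

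The genuine problem is your third paragraph. You claim the upper-bound half of the invariant is needed ``to re-derive $\boldsymbol{\mu}_{k+2}\ge {\bf z}_{k+1}$ and positivity at the next step,'' and you then propose the stronger induction hypothesis ${\bf z}_k>0$, $g_{\mathcal{E}}({\bf z}_k)\ge {\bf z}_k$, ${\bf z}_k\le {\bf w}$ with $\mathcal{A}{\bf w}^{l-1}\le{\bf b}$. None of this is needed: the only fact the next step uses is ${\bf z}_{k+1}\ge 0$, since Lemma \ref{lemma0} requires nothing more to give $g_{\mathcal{E}}({\bf z}_{k+1})\in\mathbb{R}^n_{++}$, and the nonnegativity of ${\bf y}_{k+2}$ is enforced by the if-test, not derived. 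Worse, the stronger hypothesis would in general \emph{not} propagate through the Anderson step: the weights $\alpha_i^{(k)}$ only sum to one and may be negative, so ${\bf y}_{k+1}$ can overshoot $\boldsymbol{\mu}_{k+1}$ and ${\bf x}^{*}$, and there is no reason that $g_{\mathcal{E}}({\bf z}_{k+1})\ge{\bf z}_{k+1}$ or ${\bf z}_{k+1}\le{\bf w}$ survives extrapolation (the equivalence $\mathcal{A}{\bf x}^{l-1}\le{\bf b}\Leftrightarrow{\bf x}\le g_{\mathcal{E}}({\bf x})$ and the resulting monotone-fixed-point estimates belong to the pure splitting iteration; also, your first-paragraph claim $\mathcal{A}\boldsymbol{\mu}_{k+1}^{l-1}\le{\bf b}$ uses $\mathcal{F}\ge 0$ and so does not cover weak regular splittings as stated). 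So the ``reformulated'' induction would introduce a gap where none exists: drop it and keep the short induction of your first two paragraphs, which is exactly the paper's proof.
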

	\begin{proof}
		Since $\mathcal{A}{\bf{z}}_0^{l-1}\le {\bf{b}}$, it is true that ${\bf{z}}_{0}\le {\bf{z}}_1$ under the assumption of Lemma \ref{lemma0}.
		By Lemma \ref{lemma0}, we know $\boldsymbol{\mu}_{k}\in \mathbb{R}^{n}_{++}$ for any $k\ge 1$ via the induction. Then the proof is completed.
	\end{proof}
	\begin{remark}\label{re2}
		
		If $\theta_k=1$ in Step 10 of the RTSMRAA for some $k$, it is noted that the TSM is just improved by the restarted Anderson acceleration for the $k$-th iteration, i.e., ${\bf{z}}_{k}={\bf{y}}_k$. In this case,  we still set ${\bf{y}}_{k+1}\ge 0$ rather than ${\bf{y}}_{k+1}>0$ in Step 9 of the RTSMRAA. That's because, by Lemma \ref{lemma0}, we can still get a vector ${\bf{z}}_{k+1}\in \mathbb{R}^{n}_{++}$. If ${\bf{y}}_{k+1}$ is not nonnegative for all $k$, the RTSMRAA reduces to the TSM. Therefore, the next step ${\bf{z}}_{k+1}$ can be always positive by Lemma \ref{lemma0}.

	\end{remark}

	\begin{lemma}\label{map}
		Let ${\bf{x}}^{*}$ be the solution of the multilinear systems \eqref{ax=b} with $\mathcal {A}$ being a strong $\mathcal {M}$-tensor and ${\bf{b}}\in \mathbb{R}^{n}_{++}$.  For every (weak) regular splitting $\mathcal {A}=\mathcal {E}-\mathcal {F}$, there exists a $\mathbb{U}_{\delta}$ such that the mapping $g_{\mathcal{E}}({\bf{x}}):\mathbb{R}^{n}\rightarrow \mathbb{R}^{n}$ has the following properties for a sufficiently small $\delta$:
		\begin{description}
			\item[(\romannumeral1)] For any ${\bf{x}}\in \mathbb{U}_{\delta}$, $g_{\mathcal{E}}({\bf{x}})\in \mathbb{U}_{\delta}$.
			\item[(\romannumeral2)] For any ${\bf{x}}$, ${\bf{y}}\in \mathbb{U}_{\delta}$, there exists a real number $c\in [0,1)$ such that	
			\begin{equation*}
				||g_{\mathcal{E}}({\bf{x}})-g_{\mathcal{E}}({\bf{y}})||\le c||{\bf{x}}-{\bf{y}}||.
			\end{equation*}
			\item[(\romannumeral3)] For any ${\bf{x}}\in \mathbb{U}_{\delta}$,
			\begin{equation*}
				\nabla g_{\mathcal{E}}({\bf{x}})=\big(\mathcal{E}g_{\mathcal{E}}({\bf{x}})^{[l-2]}\big)^{-1}\mathcal{\overline{F}}{\bf{x}}^{l-2},~\nabla f_{\mathcal{E}}({\bf{x}})=\nabla g_{\mathcal{E}}({\bf{x}})-I.
			\end{equation*}
			\item[(\romannumeral4)] $\nabla g_{\mathcal{E}}({\bf{x}}^{*})$ is Lipschitz continuously, i.e., for any ${\bf{x}}$, ${\bf{y}}\in \mathbb{U}_{\delta}$,  there exists a real number $\gamma>0$ such that
			\begin{equation*}
				||\nabla f_{\mathcal{E}}({\bf{x}})-\nabla f_{\mathcal{E}}({\bf{y}})||\le \gamma ||{\bf{x}}-{\bf{y}}||.
			\end{equation*}
		\end{description}
		
	\end{lemma}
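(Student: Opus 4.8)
The plan is to establish the four assertions in order, each building on the previous one, with the core structural fact being that $\mathbf{x}^{*}$ is a fixed point of $g_{\mathcal{E}}$ and that $g_{\mathcal{E}}$ is differentiable with spectral radius of its Jacobian at $\mathbf{x}^{*}$ strictly below $1$. First I would recall from Lemma \ref{re1} that $\mathbf{x}^{*}\in\mathbb{R}^{n}_{++}$ is the unique positive solution, and from the splitting identity \eqref{sp1} that $g_{\mathcal{E}}(\mathbf{x}^{*})=\mathbf{x}^{*}$. Since each component of $\mathbf{x}^{*}$ is strictly positive, the map $\mathbf{y}\mapsto\mathbf{y}^{[1/(l-1)]}$ is smooth in a neighbourhood of the relevant point and the inner expression $M(\mathcal{E})^{-1}\mathcal{F}\mathbf{x}^{l-1}+M(\mathcal{E})^{-1}\mathbf{b}$ is polynomial in $\mathbf{x}$, so $g_{\mathcal{E}}$ is $C^{1}$ (indeed $C^{\infty}$) on an open ball around $\mathbf{x}^{*}$; a direct application of the chain rule to \eqref{sp1}, using the semi-symmetrized tensor $\overline{\mathcal{F}}$ so that $\nabla(\mathcal{F}\mathbf{x}^{l-1})=(l-1)\overline{\mathcal{F}}\mathbf{x}^{l-2}$ and the substitution $\mathcal{E}g_{\mathcal{E}}(\mathbf{x})^{[l-2]}$ absorbing the factor coming from differentiating the $[1/(l-1)]$ power, yields the formula in (\romannumeral3); this should be checked entrywise but is routine. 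Likewise $\nabla f_{\mathcal{E}}=\nabla g_{\mathcal{E}}-I$ is immediate from $f_{\mathcal{E}}=g_{\mathcal{E}}-\mathbf{x}$.

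Next, for (\romannumeral2) and (\romannumeral1) I would invoke the Remark following Definition \ref{myspitting}: because $\mathcal{A}$ is a strong $\mathcal{M}$-tensor and $\mathcal{A}=\mathcal{E}-\mathcal{F}$ is a (weak) regular splitting, the splitting is convergent, i.e. $\rho(M(\mathcal{E})^{-1}\mathcal{F})<1$. The key point is to relate $\nabla g_{\mathcal{E}}(\mathbf{x}^{*})$ to this iteration tensor and conclude $\rho(\nabla g_{\mathcal{E}}(\mathbf{x}^{*}))<1$; this is the standard local-convergence fact for tensor splitting iterations and I would either cite it or derive it from the structure of $\nabla g_{\mathcal{E}}(\mathbf{x}^{*})=\big(\mathcal{E}(\mathbf{x}^{*})^{[l-2]}\big)^{-1}\overline{\mathcal{F}}(\mathbf{x}^{*})^{l-2}$ together with a diagonal scaling argument (conjugating by $\mathrm{diag}(\mathbf{x}^{*})$-type matrices) that turns it into a matrix similar to or dominated by $M(\mathcal{E})^{-1}\mathcal{F}$ in spectral radius. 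Once $\rho(\nabla g_{\mathcal{E}}(\mathbf{x}^{*}))=:\rho_{0}<1$, pick $c$ with $\rho_{0}<c<1$; by continuity of $\nabla g_{\mathcal{E}}$ (from (\romannumeral3)) and the fact that an operator norm can be made arbitrarily close to the spectral radius in a suitable equivalent norm, there is $\delta_{1}>0$ with $\sup_{\mathbf{x}\in\mathbb{U}_{\delta_{1}}}\|\nabla g_{\mathcal{E}}(\mathbf{x})\|\le c$; then the mean value inequality gives the contraction estimate in (\romannumeral2) on $\mathbb{U}_{\delta}$ for any $\delta\le\delta_{1}$, and taking $\mathbf{y}=\mathbf{x}^{*}$ in that estimate gives $\|g_{\mathcal{E}}(\mathbf{x})-\mathbf{x}^{*}\|\le c\|\mathbf{x}-\mathbf{x}^{*}\|\le c\delta<\delta$, which is exactly the invariance (\romannumeral1). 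Note this also forces $\delta$ small enough that $\mathbb{U}_{\delta}$ stays inside the ball on which $g_{\mathcal{E}}$ was shown to be $C^{1}$, in particular away from the coordinate hyperplanes where $\mathbf{y}^{[1/(l-1)]}$ fails to be smooth.

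Finally, (\romannumeral4) follows by shrinking $\delta$ once more: $\nabla g_{\mathcal{E}}$ is built from $\big(\mathcal{E}g_{\mathcal{E}}(\mathbf{x})^{[l-2]}\big)^{-1}$ and $\overline{\mathcal{F}}\mathbf{x}^{l-2}$, both of which are $C^{1}$ functions of $\mathbf{x}$ on $\mathbb{U}_{\delta}$ (matrix inversion is smooth where the matrix is invertible, and invertibility is an open condition satisfied at $\mathbf{x}^{*}$ since $M(\mathcal{E})$ is a nonsingular $\mathcal{M}$-matrix and $\mathbf{x}^{*}>0$), hence $\nabla g_{\mathcal{E}}$ is $C^{1}$, hence locally Lipschitz; since $\nabla f_{\mathcal{E}}=\nabla g_{\mathcal{E}}-I$ differs from $\nabla g_{\mathcal{E}}$ by a constant, it has the same Lipschitz constant $\gamma$ on $\mathbb{U}_{\delta}$. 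Choosing the final $\delta$ to be the minimum of the radii produced in the three stages completes the proof. The main obstacle I anticipate is the spectral step: verifying carefully, with the correct powers of $l-1$ and the correct diagonal conjugation, that $\rho(\nabla g_{\mathcal{E}}(\mathbf{x}^{*}))<1$ genuinely follows from $\rho(M(\mathcal{E})^{-1}\mathcal{F})<1$ — this is where the semi-symmetrization and the $[l-2]$-exponents must be bookkept precisely, and where one must be sure the diagonal scaling matrix $\mathrm{diag}(\mathbf{x}^{*})$ is invertible (guaranteed by $\mathbf{x}^{*}\in\mathbb{R}^{n}_{++}$).
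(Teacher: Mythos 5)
Your proposal is correct and follows essentially the same route as the paper: both arguments rest on the Jacobian formula in (\romannumeral3) together with the bound $\rho\big(\nabla g_{\mathcal{E}}({\bf{x}}^{*})\big)\le c<1$ (which the paper simply imports from the proof of Theorem 5.7 in \cite{llh}, while you sketch a derivation and offer the citation as a fallback), then obtain (\romannumeral2) from continuity of $\nabla g_{\mathcal{E}}$ and the mean value inequality, (\romannumeral1) by taking ${\bf{y}}={\bf{x}}^{*}$ in (\romannumeral2), and (\romannumeral4) from local boundedness of the second derivative (the paper via result 3.3.5 of \cite{non}, you via $C^{1}$-smoothness of $\nabla g_{\mathcal{E}}$ near the positive point ${\bf{x}}^{*}$). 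The one step you leave open --- passing from $\rho\big(M(\mathcal{E})^{-1}\mathcal{F}\big)<1$ to $\rho\big(\nabla g_{\mathcal{E}}({\bf{x}}^{*})\big)<1$ by a diagonal-scaling argument --- is precisely the point the paper outsources to \cite{llh}, so citing that result as you propose closes the argument.
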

	\begin{proof}
		Since $\mathcal {A}$ is a strong $\mathcal {M}$-tensor and ${\bf{b}}$ is positive, by Lemma \ref{re1}, ${\bf{x}^{*}}$ is the unique  positive solution. By the proof of Theorem 5.7 in \cite{llh}, we have that $\mathcal{E}g_{\mathcal{E}}({\bf{x}^{*}})^{[l-2]}=\mathcal{E}({\bf{x}^{*}})^{[l-2]}$ is nonsingular and  $\nabla g_{\mathcal{E}}({\bf{x}})=\big(\mathcal{E}g_{\mathcal{E}}({\bf{x}})^{[l-2]}\big)^{-1}\mathcal{\overline{F}}{\bf{x}}^{l-2}$ satisfying {$\rho(\nabla g_{\mathcal{E}}({\bf{x}}^{*}))\le c<1$}.  Because $g_{\mathcal{E}}({\bf{x}})$ and $\nabla g_{\mathcal{E}}({\bf{x}})$ are continuous, there exists $\mathbb{U}_{\delta}$ such that {$\rho\big(\nabla g_{\mathcal{E}}({\bf{x}})\big)\le c<1$} and  $\mathcal{E}g_{\mathcal{E}}({\bf{x}})^{[l-2]}$ is nonsingular for any ${\bf{x}}\in \mathbb{U}_{\delta}$. Thus, it is easy to check that the affirmation \textbf{(\romannumeral2)} holds by differential mean value theorem and the term \textbf{(\romannumeral3)} is given. Besides, for any ${\bf{x}}\in \mathbb{U}_{\delta}$, we have
		\begin{align*}
			||g_{\mathcal{E}}({\bf{x}})-{\bf{x}}^{*}||=	||g_{\mathcal{E}}({\bf{x}})-g_{\mathcal{E}}({\bf{x}}^{*})||\le c||{\bf{x}}-{\bf{x}}^{*}||<||{\bf{x}}-{\bf{x}}^{*}||\le \delta.
		\end{align*}
		Then the desired result \textbf{(\romannumeral1)} is obtained.
		Furthermore, by the result 3.3.5 of \cite{non} and  the second G-derivative $\nabla^2 f_{\mathcal{E}}({\bf{x}})$ of $\nabla f_{\mathcal{E}}({\bf{x}})$ at ${\bf{x}}$ is continuous, we have
		\begin{equation*}
			||\nabla f_{\mathcal{E}}({\bf{x}})-\nabla f_{\mathcal{E}}({\bf{y}})||\le \max\limits_{{\bf{x},{\bf{y}}}\in \mathbb{R}^{n}_{+}}\max\limits_{t\in [0,1]}\big( ||\nabla^2 f_{\mathcal{E}}({\bf{x}}+t({\bf{y}}-{\bf{x}}))||\big)||{\bf{x}}-{\bf{y}}||.
		\end{equation*}
		Since $\mathbb{R}^{n}_{+}$ is a closed set, the term \textbf{(\romannumeral4)} is shown by $\gamma=\max\limits_{{\bf{x},{\bf{y}}}\in \mathbb{R}^{n}_{+}}\max\limits_{t\in [0,1]}\big( ||\nabla^2 f_{\mathcal{E}}({\bf{x}}+t({\bf{y}}-{\bf{x}}))||\big).$
	\end{proof}
	
	\begin{proposition}\label{le2}
		Under the assumption of Lemma \ref{map}, we have $\big(\mathbb{I}\setminus\{{\bf{x}}^{*}\}\big)\cap \big(\mathbb{U}_{\delta}\setminus\{{\bf{x}}^{*}\})\ne \emptyset$, where $\mathbb{I}=\big\{{\bf{x}}\big{|}{\bf{x}}>0, 0<\mathcal{A}{\bf{x}}^{l-1}\le b\big\}$ and $\mathbb{U}_{\delta}$ is defined in Lemma \ref{map}.
	\end{proposition}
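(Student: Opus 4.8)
The plan is to exhibit an explicit one-parameter family of points in $\mathbb{I}$ that converge to ${\bf{x}}^{*}$ without ever coinciding with it, using the positive homogeneity of the map ${\bf{x}}\mapsto\mathcal{A}{\bf{x}}^{l-1}$. For $t\in(0,1)$, set ${\bf{x}}_{t}=t\,{\bf{x}}^{*}$. Since $\mathcal{A}$ is a strong $\mathcal{M}$-tensor and ${\bf{b}}\in\mathbb{R}^{n}_{++}$, Lemma~\ref{re1} ensures that ${\bf{x}}^{*}$ is the unique positive solution of \eqref{ax=b}; in particular ${\bf{x}}^{*}>0$, hence ${\bf{x}}^{*}\ne 0$.

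First I would verify ${\bf{x}}_{t}\in\mathbb{I}$. Positivity is clear since $t>0$ and ${\bf{x}}^{*}>0$ force ${\bf{x}}_{t}>0$. For the remaining constraint, \eqref{ax} yields $\mathcal{A}{\bf{x}}_{t}^{l-1}=t^{l-1}\mathcal{A}({\bf{x}}^{*})^{l-1}=t^{l-1}{\bf{b}}$, and because $0<t^{l-1}\le 1$ and every component of ${\bf{b}}$ is strictly positive, we obtain $0<\mathcal{A}{\bf{x}}_{t}^{l-1}\le{\bf{b}}$. Thus ${\bf{x}}_{t}\in\mathbb{I}$.

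Next I would locate ${\bf{x}}_{t}$ inside the punctured ball $\mathbb{U}_{\delta}\setminus\{{\bf{x}}^{*}\}$. Since $\|{\bf{x}}_{t}-{\bf{x}}^{*}\|=(1-t)\|{\bf{x}}^{*}\|\to 0$ as $t\uparrow 1$, choosing $t$ with $1-t<\delta/\|{\bf{x}}^{*}\|$ puts ${\bf{x}}_{t}\in\mathbb{U}_{\delta}$; moreover ${\bf{x}}_{t}={\bf{x}}^{*}$ is impossible because $1-t\ne 0$ and ${\bf{x}}^{*}\ne 0$. Combining the two observations, such an ${\bf{x}}_{t}$ lies in $\big(\mathbb{I}\setminus\{{\bf{x}}^{*}\}\big)\cap\big(\mathbb{U}_{\delta}\setminus\{{\bf{x}}^{*}\}\big)$, which establishes the proposition.

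I expect no genuine obstacle here; the statement is essentially a soft consequence of homogeneity. The only two points requiring a moment's care are: (i) that ${\bf{x}}^{*}$ is strictly positive, so that the scaled iterates are genuinely distinct from it, which follows from Lemma~\ref{re1}; and (ii) that the scaling exponent $l-1$ interacts correctly with the upper bound, which it does since $t\in(0,1)$ gives $t^{l-1}\le 1$ while ${\bf{b}}$ is strictly positive, so both the strict lower bound and the (possibly non-strict) upper bound $\mathcal{A}{\bf{x}}_{t}^{l-1}\le{\bf{b}}$ are preserved.
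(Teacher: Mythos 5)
Your proof is correct, and it takes a genuinely different route from the paper. The paper invokes Lemma~\ref{invmethid} to get some point of $\mathbb{I}\setminus\{{\bf{x}}^{*}\}$ and then argues by continuity of $p({\bf{x}})=\mathcal{A}{\bf{x}}^{l-1}$ that there is a punctured neighborhood $\mathbb{U}_{\delta'}\setminus\{{\bf{x}}^{*}\}$ all of whose points satisfy $0<\mathcal{A}\widehat{{\bf{x}}}^{l-1}\le{\bf{b}}$, then intersects with $\mathbb{U}_{\delta}$. You instead exploit the degree-$(l-1)$ positive homogeneity of ${\bf{x}}\mapsto\mathcal{A}{\bf{x}}^{l-1}$ and exhibit the explicit family ${\bf{x}}_{t}=t\,{\bf{x}}^{*}$, $t\in(0,1)$, for which $\mathcal{A}{\bf{x}}_{t}^{l-1}=t^{l-1}{\bf{b}}$ lies strictly between $0$ and ${\bf{b}}$, and then take $t$ close enough to $1$ to land in $\mathbb{U}_{\delta}$. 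Your construction is more elementary (it needs only Lemma~\ref{re1} for the strict positivity of ${\bf{x}}^{*}$, not Lemma~\ref{invmethid}) and, notably, it sidesteps a weak spot in the paper's argument: continuity of $p$ at ${\bf{x}}^{*}$, where $p({\bf{x}}^{*})={\bf{b}}$ exactly, only preserves the strict lower bound $0<p(\widehat{{\bf{x}}})$ in a small neighborhood, not the upper bound $p(\widehat{{\bf{x}}})\le{\bf{b}}$; indeed $\widehat{{\bf{x}}}=(1+\epsilon){\bf{x}}^{*}$ lies in every punctured neighborhood of ${\bf{x}}^{*}$ yet gives $(1+\epsilon)^{l-1}{\bf{b}}>{\bf{b}}$, so the paper's claim that the whole set $\mathbb{U}_{\delta'}\setminus\{{\bf{x}}^{*}\}$ sits inside $\mathbb{I}$ is too strong as stated, even though the nonemptiness conclusion it is used for is true. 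Your scaling argument proves exactly the nonemptiness that is needed, cleanly; the paper's approach, if repaired (e.g., by restricting to points below ${\bf{x}}^{*}$ or arguing as you do), would yield the stronger-sounding but unnecessary statement that a whole neighborhood slice lies in $\mathbb{I}$. The only cosmetic remark is that for $t\in(0,1)$ one in fact has $t^{l-1}<1$, but the non-strict inequality you use is all that the definition of $\mathbb{I}$ requires.
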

	\begin{proof}
		By Lemma \ref{invmethid}, $\mathbb{I}\setminus\{{\bf{x}}^{*}\}\ne \emptyset$. Let  $p({\bf{x}})=\mathcal{A}{\bf{x}}^{l-1}$ which is a continuous function from $\mathbb{R}^{n}\rightarrow \mathbb{R}^{n}$. Thus, there exists a real number $\delta^{'}>0$ such {that $0<\mathcal{A}{\widehat{{\bf{x}}}}^{l-1}\le b$} for any $\widehat{{\bf{x}}}\in \mathbb{U}_{\delta^{'}}\setminus\{{\bf{x}}^{*}\}$. Let $\delta^{''}=\min\big\{\delta^{'},\delta\big\}$. It is easy to check that $\widehat{{\bf{x}}}\in \mathbb{U}_{\delta^{''}}\subseteq\mathbb{U}_{\delta}$. Therefore, the proof is completed.
	\end{proof}
	\begin{lemma}\label{lemain}
		Under the assumption of Lemma \ref{map}, there exists $\mathbb{U}_{\hat{\delta}}\subseteq\mathbb{U}_{\delta}$ with sufficiently small $\hat{\delta}$ such that for any ${\bf{x}}\in \mathbb{R}^{n}_{++}$,
		\begin{equation}\label{eq}
			||f_{\mathcal{E}}({\bf{x}})-\nabla f_{\mathcal{E}}({\bf{x}}^{*})({\bf{x}}-{\bf{x}}^{*})||\le \frac{\gamma}{2}||{\bf{x}}-{\bf{x}}^{*}||^2
		\end{equation}
		and
		\begin{equation}\label{eq2}
			(1-c)||{\bf{x}}-{\bf{x}}^{*}||\le||f_{\mathcal{E}}({\bf{x}})||\le(1+c)||{\bf{x}}-{\bf{x}}^{*}||.
		\end{equation}
	\end{lemma}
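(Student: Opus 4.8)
The plan is to derive both \eqref{eq} and \eqref{eq2} from the structural facts about $f_{\mathcal{E}}$ collected in Lemma \ref{map}, namely that near ${\bf{x}}^{*}$ the map $g_{\mathcal{E}}$ is a contraction with constant $c<1$, that $\nabla f_{\mathcal{E}} = \nabla g_{\mathcal{E}} - I$ exists and is Lipschitz with constant $\gamma$, and that $f_{\mathcal{E}}({\bf{x}}^{*}) = g_{\mathcal{E}}({\bf{x}}^{*}) - {\bf{x}}^{*} = {\bf{0}}$ since ${\bf{x}}^{*}$ is the fixed point. First I would fix $\hat\delta \le \delta$ small enough that $\mathbb{U}_{\hat\delta}\subseteq\mathbb{U}_{\delta}$ and the Lipschitz estimate of Lemma \ref{map}\textbf{(\romannumeral4)} applies on the segment joining ${\bf{x}}$ and ${\bf{x}}^{*}$; strictly speaking the statement says ``for any ${\bf{x}}\in\mathbb{R}^{n}_{++}$'' but the inequalities can only hold on a neighborhood, so I read it as: for ${\bf{x}}\in\mathbb{R}^{n}_{++}\cap\mathbb{U}_{\hat\delta}$.

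For \eqref{eq}, the standard device is the integral form of the mean value theorem: since $f_{\mathcal{E}}({\bf{x}}^{*})={\bf{0}}$,
\begin{equation*}
	f_{\mathcal{E}}({\bf{x}}) = f_{\mathcal{E}}({\bf{x}}) - f_{\mathcal{E}}({\bf{x}}^{*}) = \int_0^1 \nabla f_{\mathcal{E}}\big({\bf{x}}^{*} + t({\bf{x}}-{\bf{x}}^{*})\big)\,({\bf{x}}-{\bf{x}}^{*})\,dt,
\end{equation*}
so that
\begin{equation*}
	f_{\mathcal{E}}({\bf{x}}) - \nabla f_{\mathcal{E}}({\bf{x}}^{*})({\bf{x}}-{\bf{x}}^{*}) = \int_0^1 \big(\nabla f_{\mathcal{E}}({\bf{x}}^{*}+t({\bf{x}}-{\bf{x}}^{*})) - \nabla f_{\mathcal{E}}({\bf{x}}^{*})\big)({\bf{x}}-{\bf{x}}^{*})\,dt.
\end{equation*}
Taking norms, bounding the integrand by $\gamma\,\|t({\bf{x}}-{\bf{x}}^{*})\|\cdot\|{\bf{x}}-{\bf{x}}^{*}\| = \gamma t\|{\bf{x}}-{\bf{x}}^{*}\|^2$ via Lemma \ref{map}\textbf{(\romannumeral4)}, and integrating $\int_0^1 t\,dt = \tfrac12$ gives \eqref{eq} directly.

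For \eqref{eq2}, I would write $f_{\mathcal{E}}({\bf{x}}) = g_{\mathcal{E}}({\bf{x}}) - {\bf{x}} = \big(g_{\mathcal{E}}({\bf{x}}) - g_{\mathcal{E}}({\bf{x}}^{*})\big) - ({\bf{x}} - {\bf{x}}^{*})$, using $g_{\mathcal{E}}({\bf{x}}^{*}) = {\bf{x}}^{*}$. The upper bound follows from the triangle inequality together with the contraction estimate of Lemma \ref{map}\textbf{(\romannumeral2)}: $\|f_{\mathcal{E}}({\bf{x}})\| \le \|g_{\mathcal{E}}({\bf{x}}) - g_{\mathcal{E}}({\bf{x}}^{*})\| + \|{\bf{x}}-{\bf{x}}^{*}\| \le c\|{\bf{x}}-{\bf{x}}^{*}\| + \|{\bf{x}}-{\bf{x}}^{*}\| = (1+c)\|{\bf{x}}-{\bf{x}}^{*}\|$. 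The lower bound uses the reverse triangle inequality: $\|f_{\mathcal{E}}({\bf{x}})\| \ge \|{\bf{x}}-{\bf{x}}^{*}\| - \|g_{\mathcal{E}}({\bf{x}}) - g_{\mathcal{E}}({\bf{x}}^{*})\| \ge \|{\bf{x}}-{\bf{x}}^{*}\| - c\|{\bf{x}}-{\bf{x}}^{*}\| = (1-c)\|{\bf{x}}-{\bf{x}}^{*}\|$. I expect the only genuinely delicate point to be bookkeeping on the radius: one must ensure that the chosen $\hat\delta$ keeps the whole segment $\{{\bf{x}}^{*}+t({\bf{x}}-{\bf{x}}^{*}): t\in[0,1]\}$ inside $\mathbb{U}_{\delta}$ (immediate, by convexity of the ball) and, if one wants the conclusion to be meaningful for points produced by the algorithm, to intersect with $\mathbb{R}^{n}_{++}$ as in Proposition \ref{le2}; everything else is routine application of the mean value theorem and triangle inequalities.
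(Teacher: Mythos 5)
Your proposal is correct and follows essentially the same route as the paper: the quadratic estimate \eqref{eq} is exactly the standard consequence of the Lipschitz continuity of $\nabla f_{\mathcal{E}}$ (the paper simply cites result 3.2.12 of Ortega--Rheinboldt instead of writing out the integral mean value argument as you do), and \eqref{eq2} is obtained in both cases from $f_{\mathcal{E}}({\bf{x}})=g_{\mathcal{E}}({\bf{x}})-g_{\mathcal{E}}({\bf{x}}^{*})-({\bf{x}}-{\bf{x}}^{*})$ together with the contraction property \textbf{(\romannumeral2)} and the (reverse) triangle inequality. Your remark that the estimates really hold only on the neighborhood $\mathbb{U}_{\hat{\delta}}$ rather than on all of $\mathbb{R}^{n}_{++}$ is a fair reading of a loosely stated hypothesis and does not affect the argument.
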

	\begin{proof}
		The formula \eqref{eq} is a direct result by 3.2.12 of \cite{non} and \textbf{(\romannumeral3)} of Lemma \ref{map}. The second formula can be {shown} by \textbf{(\romannumeral2)} of Lemma \ref{map} and
		\begin{align*}
			||f_{\mathcal{E}}({\bf{x}})||=||f_{\mathcal{E}}({\bf{x}})-f_{\mathcal{E}}({\bf{x}}^{*})||=||g_{\mathcal{E}}({\bf{x}})-g_{\mathcal{E}}({\bf{x}}^{*})+({\bf{x}}-{\bf{x}}^{*})||.
		\end{align*}
	\end{proof}

	Let $\boldsymbol{\epsilon}_k={\bf{z}}_k-{\bf{z}}^{*}$,  $\hat{\boldsymbol{\epsilon}}_k={\bf{y}}_k-{\bf{z}}^{*}$and ${\bf{z}}^{*}={\bf{x}}^{*}.$ Then we give the convergence analysis of the proposed algorithm as follows.
	
	\begin{theorem}\label{add0}
		Under the assumption of Lemma \ref{map}, for any initial vector ${\bf{z}}_{0}\in \mathbb{I}\cap \mathbb{U}_{\delta}$, there exists $\mathbb{U}_{\hat{\delta}}\subseteq \mathbb{I}\cap \mathbb{U}_{\delta}$ with a small enough $\hat{\delta}$ such that ${\bf{y}}_k,~{\bf{z}}_{k}\in \mathbb{U}_{\hat{\delta}}$($k\ge 1$), and
		\begin{equation}\label{fe}
			||f_{\mathcal{E}}({\bf{z}}_k)||\le\tau \overline {c}^{k}||f_{\mathcal{E}}({\bf{z}}_0)||,
		\end{equation}
		where {${\bf{y}}_k$ and ${\bf{z}}_k$ are} generated by the RTSMRAA, $\forall \overline{c}\in (c,1)$ and $\tau=\frac{1+c}{1-c}$.
	\end{theorem}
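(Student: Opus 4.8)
The plan is to prove the two assertions -- the membership $\mathbf{y}_k,\mathbf{z}_k\in\mathbb{U}_{\hat\delta}$ and the geometric decay \eqref{fe} -- simultaneously by induction on $k$, choosing $\hat\delta$ small enough at the outset that all the estimates from Lemmas \ref{map} and \ref{lemain} are available on $\mathbb{U}_{\hat\delta}$ and that the two branches of the RTSMRAA (the accepted Anderson step and the fallback $\mathbf{z}_{k+1}=\boldsymbol{\mu}_{k+1}$) both keep the iterate inside $\mathbb{U}_{\hat\delta}$. First I would record the base case: since $\mathbf{z}_0\in\mathbb{I}\cap\mathbb{U}_\delta$, part \textbf{(\romannumeral1)} of Lemma \ref{map} gives $\mathbf{z}_1=\boldsymbol{\mu}_1=\mathbf{y}_1=g_{\mathcal{E}}(\mathbf{z}_0)\in\mathbb{U}_\delta$, and Proposition \ref{lemma2} keeps everything positive, so by shrinking we may assume $\mathbf{z}_1\in\mathbb{U}_{\hat\delta}$ and \eqref{fe} holds for $k=0$ trivially and for $k=1$ because $f_{\mathcal{E}}$ is a contraction-type map: $\|f_{\mathcal{E}}(\mathbf{z}_1)\|\le(1+c)\|\mathbf{z}_1-\mathbf{x}^*\|=(1+c)\|g_{\mathcal{E}}(\mathbf{z}_0)-g_{\mathcal{E}}(\mathbf{x}^*)\|\le(1+c)c\|\mathbf{z}_0-\mathbf{x}^*\|\le\frac{(1+c)c}{1-c}\|f_{\mathcal{E}}(\mathbf{z}_0)\|\le\tau\bar c\|f_{\mathcal{E}}(\mathbf{z}_0)\|$ once $\bar c\ge c$.

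For the inductive step, assume $\mathbf{z}_j,\mathbf{y}_j\in\mathbb{U}_{\hat\delta}$ and \eqref{fe} for all $j\le k$. The key quantity to control is $\|f_{\mathcal{E}}(\mathbf{z}_{k+1})\|$. In the fallback branch $\mathbf{z}_{k+1}=\boldsymbol{\mu}_{k+1}=g_{\mathcal{E}}(\mathbf{z}_k)$, the bound is immediate from \eqref{eq2} and part \textbf{(\romannumeral2)} of Lemma \ref{map}, exactly as in the base case: $\|f_{\mathcal{E}}(\mathbf{z}_{k+1})\|\le(1+c)\|g_{\mathcal{E}}(\mathbf{z}_k)-g_{\mathcal{E}}(\mathbf{x}^*)\|\le(1+c)c\|\mathbf{z}_k-\mathbf{x}^*\|\le\frac{(1+c)c}{1-c}\|f_{\mathcal{E}}(\mathbf{z}_k)\|\le\tau c\,\bar c^{\,k}\|f_{\mathcal{E}}(\mathbf{z}_0)\|\le\tau\bar c^{\,k+1}\|f_{\mathcal{E}}(\mathbf{z}_0)\|$. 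For the accepted Anderson branch, I would first estimate $\|f_{\mathcal{E}}(\mathbf{y}_{k+1})\|$. Writing $\mathbf{y}_{k+1}=\sum_{i=0}^{m_k}\alpha_i^{(k)}g_{\mathcal{E}}(\mathbf{z}_{k-m_k+i})$ with $\sum_i\alpha_i^{(k)}=1$, I use the affine-model expansion from \eqref{eq}: $f_{\mathcal{E}}(\mathbf{y}_{k+1})=\nabla f_{\mathcal{E}}(\mathbf{x}^*)(\mathbf{y}_{k+1}-\mathbf{x}^*)+O(\hat\delta)\|\mathbf{y}_{k+1}-\mathbf{x}^*\|$, and since $\sum_i\alpha_i^{(k)}=1$ one has $\mathbf{y}_{k+1}-\mathbf{x}^*=\sum_i\alpha_i^{(k)}(g_{\mathcal{E}}(\mathbf{z}_{k-m_k+i})-\mathbf{x}^*)$, so $\nabla f_{\mathcal{E}}(\mathbf{x}^*)(\mathbf{y}_{k+1}-\mathbf{x}^*)$ can be re-assembled, modulo quadratic errors bounded using \eqref{eq}, into $\sum_i\alpha_i^{(k)}f_{\mathcal{E}}(g_{\mathcal{E}}(\mathbf{z}_{k-m_k+i}))$-type terms, and ultimately, using the defining optimality $\|Q_k\boldsymbol{\alpha}^{(k)}\|=\|\sum_i\alpha_i^{(k)}f_{k-m_k+i}\|\le\|f_k\|$ (take $\boldsymbol{\alpha}=e_{m_k}$ as a feasible comparison point) together with the coefficient bound $\sum_i|\alpha_i^{(k)}|\le\kappa_{\boldsymbol{\alpha}}$ enforced in Step 9, one gets $\|f_{\mathcal{E}}(\mathbf{y}_{k+1})\|\le c\|f_{\mathcal{E}}(\mathbf{z}_k)\|+C\kappa_{\boldsymbol{\alpha}}\hat\delta\max_i\|f_{\mathcal{E}}(\mathbf{z}_{k-m_k+i})\|$ for a constant $C$ depending only on $\gamma,c$; invoking the induction hypothesis \eqref{fe} for the past $m_k+1$ iterates and the fact that $\bar c^{\,k-m_k+i}\le\bar c^{\,k-m}\le\bar c^{\,-m}\bar c^{\,k}$, this collapses to $\|f_{\mathcal{E}}(\mathbf{y}_{k+1})\|\le\big(c+C\kappa_{\boldsymbol{\alpha}}\bar c^{\,-m}\hat\delta\big)\tau\bar c^{\,k}\|f_{\mathcal{E}}(\mathbf{z}_0)\|$, which is $\le\tau\bar c^{\,k+1}\|f_{\mathcal{E}}(\mathbf{z}_0)\|$ once $\hat\delta$ is chosen small enough that $c+C\kappa_{\boldsymbol{\alpha}}\bar c^{\,-m}\hat\delta\le\bar c$. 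Then since $\mathbf{z}_{k+1}=\theta_k\mathbf{y}_{k+1}+(1-\theta_k)\boldsymbol{\mu}_{k+1}$ is a convex combination and $f_{\mathcal{E}}$ obeys the two-sided bound \eqref{eq2}, one passes the estimate on $\|f_{\mathcal{E}}(\mathbf{y}_{k+1})\|$ and $\|f_{\mathcal{E}}(\boldsymbol{\mu}_{k+1})\|$ through: $\|\mathbf{z}_{k+1}-\mathbf{x}^*\|\le\theta_k\|\mathbf{y}_{k+1}-\mathbf{x}^*\|+(1-\theta_k)\|\boldsymbol{\mu}_{k+1}-\mathbf{x}^*\|\le\frac{1}{1-c}\big(\theta_k\|f_{\mathcal{E}}(\mathbf{y}_{k+1})\|+(1-\theta_k)\|f_{\mathcal{E}}(\boldsymbol{\mu}_{k+1})\|\big)$, hence $\|f_{\mathcal{E}}(\mathbf{z}_{k+1})\|\le(1+c)\|\mathbf{z}_{k+1}-\mathbf{x}^*\|\le\tau\bar c^{\,k+1}\|f_{\mathcal{E}}(\mathbf{z}_0)\|$ after absorbing the factor $\frac{1+c}{1-c}=\tau$; actually one has to be slightly careful here and feed the $\bar c$ contraction in at the $\mathbf{y}$/$\boldsymbol{\mu}$ level rather than re-applying the crude $(1+c)/(1-c)$ twice, but the bookkeeping is the same as in the fallback branch.

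Finally, the membership $\mathbf{z}_{k+1},\mathbf{y}_{k+1}\in\mathbb{U}_{\hat\delta}$ follows from the decay estimate itself: $\|\mathbf{z}_{k+1}-\mathbf{x}^*\|\le\frac{1}{1-c}\|f_{\mathcal{E}}(\mathbf{z}_{k+1})\|\le\frac{\tau}{1-c}\bar c^{\,k+1}\|f_{\mathcal{E}}(\mathbf{z}_0)\|\le\frac{\tau(1+c)}{1-c}\|\mathbf{z}_0-\mathbf{x}^*\|\le\frac{(1+c)^2}{(1-c)^2}\hat\delta$, so provided $\hat\delta$ was chosen at the start with $\frac{(1+c)^2}{(1-c)^2}\hat\delta\le\delta$ one stays in $\mathbb{U}_\delta$; a short additional shrink (using Proposition \ref{le2} to guarantee $\mathbb{I}\cap\mathbb{U}_\delta$ has nonempty interior around $\mathbf{x}^*$, so such a ball $\mathbb{U}_{\hat\delta}\subseteq\mathbb{I}\cap\mathbb{U}_\delta$ genuinely exists) plus Proposition \ref{lemma2} for positivity closes the induction. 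The main obstacle, and the step I would spend the most care on, is the accepted Anderson branch: correctly expanding $f_{\mathcal{E}}(\mathbf{y}_{k+1})$ around $\mathbf{x}^*$, tracking the quadratic remainders against the $\ell_1$ bound $\kappa_{\boldsymbol{\alpha}}$ on the coefficients, and using the least-squares optimality $\|Q_k\boldsymbol{\alpha}^{(k)}\|\le\|f_k\|$ in a way that is compatible with the multi-step induction hypothesis -- this is where the precise interplay between $\hat\delta$, $\kappa_{\boldsymbol{\alpha}}$, $m$, and the gap between $c$ and $\bar c$ has to be pinned down.
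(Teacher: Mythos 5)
Your induction scheme, base case, fallback branch, and the use of the least-squares optimality together with the $\ell_1$ bound $\kappa_{\boldsymbol{\alpha}}$ all parallel the paper, but the step you flag at the end and then dismiss ("the bookkeeping is the same as in the fallback branch") is a genuine gap, and it is exactly the step on which the paper's proof turns. Your plan is to bound $\|f_{\mathcal{E}}({\bf{y}}_{k+1})\|$ and $\|f_{\mathcal{E}}(\boldsymbol{\mu}_{k+1})\|$ separately and then pass to the convex combination ${\bf{z}}_{k+1}=\theta_k{\bf{y}}_{k+1}+(1-\theta_k)\boldsymbol{\mu}_{k+1}$ through the error vector, applying \eqref{eq2} twice. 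Since $f_{\mathcal{E}}$ is nonlinear, this costs an extra factor $\tau=\frac{1+c}{1-c}>1$, so the induction with the fixed constant $\tau$ in \eqref{fe} does not close for an arbitrary $\overline{c}\in(c,1)$; it would survive only under an additional restriction of the type $\frac{c(1+c)}{1-c}\le\overline{c}$, which fails already for moderate $c$. The comparison with the fallback branch is misleading: there one has the exact one-step contraction $\|f_{\mathcal{E}}(g_{\mathcal{E}}({\bf{x}}))\|\le c\,\|f_{\mathcal{E}}({\bf{x}})\|$ available, whereas no analogous identity holds at the convex combination, and you give no mechanism to "feed the $\overline{c}$ contraction in at the ${\bf{y}}/\boldsymbol{\mu}$ level."

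The paper closes this step by a different device: it never estimates $\|f_{\mathcal{E}}({\bf{y}}_{N+1})\|$ at all. It Taylor-expands $f_{\mathcal{E}}({\bf{z}}_{N+1})=(\nabla g_{\mathcal{E}}({\bf{z}}^{*})-I)\boldsymbol{\epsilon}_{N+1}+\varOmega_{N+1}$ at the solution, so the convex combination passes through the \emph{linear} map $\nabla g_{\mathcal{E}}({\bf{z}}^{*})-I$; then the commutation $\nabla g_{\mathcal{E}}({\bf{z}}^{*})(\nabla g_{\mathcal{E}}({\bf{z}}^{*})-I)=(\nabla g_{\mathcal{E}}({\bf{z}}^{*})-I)\nabla g_{\mathcal{E}}({\bf{z}}^{*})$ turns the Anderson part of \eqref{addeq01} into $\nabla g_{\mathcal{E}}({\bf{z}}^{*})\sum_{i}\alpha_{i}^{(N)}f_{\mathcal{E}}({\bf{z}}_{N-m_N+i})$ plus quadratic remainders, at which point the optimality bound $\|\sum_{i}\alpha_{i}^{(N)}f_{\mathcal{E}}({\bf{z}}_{N-m_N+i})\|\le\|f_{\mathcal{E}}({\bf{z}}_{N})\|$ produces the leading term $c\|f_{\mathcal{E}}({\bf{z}}_{N})\|$ with no extra $\tau$, and the new remainder $\varOmega_{N+1}$ is absorbed into the left-hand side (the denominator $1-\frac{\gamma\hat{\delta}}{2(1-c)}$ in \eqref{allq3}), with $\hat{\delta}$ fixed in advance through \eqref{e3}--\eqref{e4}. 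None of this appears in your sketch, and it is precisely the missing idea. A secondary imprecision: the least-squares optimality controls combinations of the columns $f_j=f_{\mathcal{E}}({\bf{z}}_j)$ of $Q_k$, not the "$\sum_i\alpha_i^{(k)}f_{\mathcal{E}}(g_{\mathcal{E}}({\bf{z}}_{k-m_k+i}))$-type terms" you invoke; it is the commutation step above that brings the expansion of ${\bf{y}}_{k+1}$ into the controllable form. Finally, your fallback chain as written also silently drops a factor $\tau$ (as does the paper's own display); the clean repair there is to use $\|f_{\mathcal{E}}(g_{\mathcal{E}}({\bf{z}}_k))\|\le c\|f_{\mathcal{E}}({\bf{z}}_k)\|$ directly.
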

	\begin{proof}
		We can always choose the ${\bf{z}}_{0}\in \mathbb{U}_{\hat{\delta}}$ with a sufficiently small $\hat{\delta}$ such that $\hat{\delta}<2(1-c)/\gamma$,
		\begin{equation}\label{e3}
			\frac{\kappa_{\boldsymbol{\alpha}}\tau(2c+\gamma\hat{\delta})}{2(1-c)}\overline{c}^{-m}||f_{\mathcal{E}}({\bf{z}}_{0})||\le 	\frac{\kappa_{\boldsymbol{\alpha}}\tau(2c+\gamma\hat{\delta})(1+c)}{2(1-c)}\overline{c}^{-m}||{\boldsymbol{\epsilon}}_0||\le \hat{\delta}
		\end{equation}	
		and
		\begin{equation}\label{e4}
			\frac{\frac{c}{\overline{c}}+\frac{\kappa_{\boldsymbol{\alpha}}\gamma\hat{\delta}}{2(1-c)}\overline{c}^{-m-1}}{1-\frac{\hat{\delta}\gamma}{2(1-c)}}\le 1.	
		\end{equation}
		
		For any initial vector ${\bf{z}}_{0}\in \mathbb{U}_{\hat{\delta}}\subseteq \mathbb{I}\cap \mathbb{U}_{\delta}$ with a sufficiently small $\hat{\delta}$ such that \eqref{eq} and \eqref{eq2} hold, by Lemma \ref{map} and the proof of Proposition \ref{le2},  we know that there exists $\mathbb{U}_{\hat{\delta}}\subseteq \mathbb{I}\cap \mathbb{U}_{\delta}$ such that ${\bf{z}}_1,~{\bf{y}}_1,~{\boldsymbol{\mu}}_1,~{\boldsymbol{\mu}_2}\in \mathbb{U}_{\hat{\delta}}$ and the inequality \eqref{fe} holds. Assume that ${\bf{y}}_k,~{\bf{z}}_k \in \mathbb{U}_{\hat{\delta}}$ and $||f_{\mathcal{E}}({\bf{z}}_k)||\le \tau\overline {c}^{k}||f_{\mathcal{E}}({\bf{z}}_0)||$ for any $0 \le k\le N$. Now we will prove  ${\bf{z}}_{N+1}\in \mathbb{U}_{\hat{\delta}}$ and the inequality $||f_{\mathcal{E}}({\bf{z}}_{N+1})||\le \tau\overline {c}^{N+1}||f_{\mathcal{E}}({\bf{z}}_0)||$ by the induction. By Lemma \ref{lemain}, we have
		\begin{equation}\label{addeq0}
			f_{\mathcal{E}}({\bf{z}}_k)=\nabla f_{\mathcal{E}}({\bf{z}}^{*}){{\boldsymbol{\epsilon}}_k}+{\varOmega}_k,
		\end{equation}	
		and $||{\varOmega_k}||\le \frac{\gamma}{2}||{\boldsymbol{\epsilon}_k}||^2$.
		Therefore, it is easy to check that
		\begin{equation}\label{addeq00}
			g_{\mathcal{E}}({\bf{z}}_k)={\bf{z}}^{*}+\nabla g_{\mathcal{E}}({\bf{z}}^{*})\boldsymbol{\epsilon}_k+\varOmega_k.
		\end{equation}
		Firstly, we will prove \eqref{fe} and ${\bf{y}}_{N+1}\in \mathbb{U}_{\hat{\delta}}$ from the following two cases.\\
		
		{\bf{Case \uppercase\expandafter{\romannumeral1}}}: If ${\bf{y}}_{N+1}\le 0$ or $\sum\limits_{i=0}^{m_k}|\alpha_i^{(k)}|> \kappa_{\boldsymbol{\alpha}}$, by Lemma \ref{map}, Lemma \ref{lemain} and the hypothesis, we imply ${\bf{z}}_{N+1}=\boldsymbol{\mu}_{N+1}\in \mathbb{U}_{\hat{\delta}}$ and
		\begin{align*}
			||f_{\mathcal{E}}({\bf{z}}_{N+1})||&\le (1+c)||{\boldsymbol{\epsilon}}_{N+1}||\\
			&\le(1+c)||g_{\mathcal{E}}({\bf{z}}_{N})-g_{\mathcal{E}}({\bf{z}}^{*})|| \\
			&\le c(1+c)||\boldsymbol{\epsilon}_{N}||\\
			&\le \frac{c(1+c)}{1-c}||f_{\mathcal{E}}({\bf{z}}_N)||\\
			&\le \frac{1+c}{1-c}\overline{c}^{N+1}||f_{\mathcal{E}}({\bf{z}}_0)||=\tau\overline{c}^{N+1}||f_{\mathcal{E}}({\bf{z}}_0)||.
		\end{align*}
		This gives the inequality \eqref{fe}.
		
		{\bf{Case \uppercase\expandafter{\romannumeral2}}}: If ${\bf{y}}_{N+1}\ge 0$ and $\sum\limits_{i=0}^{m_k}|\alpha_i^{(k)}|\le \kappa_{\boldsymbol{\alpha}}$,
		note that the fact $\sum\limits_{i=0}^{m_{N}}\alpha_i^{(N)}=1$ combing with \eqref{addeq00} gives
		\begin{equation}\label{addeq01}
			{\bf{y}}_{N+1}={\bf{z}}^{*}+\sum\limits_{i=0}^{m_{N}}\alpha_{i}^{(N)}\nabla g_{\mathcal{E}}({\bf{z}}^{*})\boldsymbol{\epsilon}_{N-m_N+i}+\sum\limits_{i=0}^{m_{N}}\alpha_{i}^{(N)}\varOmega_{N-m_N+i}.
		\end{equation}
		By Lemma \ref{lemain}, we can obtain
		\begin{equation}\label{addeq1}
			||\sum\limits_{i=0}^{m_{N}}\alpha_{i}^{(N)}\varOmega_{N-m_N+i}||\le \sum\limits_{i=0}^{m_{N}}|\alpha_{i}^{(N)}|||\varOmega_{N-m_N+i}||\le \gamma\sum\limits_{i=0}^{m_{N}}|\alpha_{i}^{(N)}| ||\boldsymbol{\epsilon}_{N-m_N+i}||^2/2.
		\end{equation}	
		Since $m_{N}=\min\{m,N\}$, we get $N-m_N+i\ge N-m$. Thus, by Lemma \ref{lemain} and the assumption, it is not difficult to get that
		
		\begin{align}\label{e1}
			||\boldsymbol{\epsilon}_{N-m_N+i}||\le \frac{1}{1-c}||f_{\mathcal{E}}({\bf{z}}_{N-m_N+i})||\le \frac{\tau }{1-c}\overline{c}^{N-m_N+i}||f_{\mathcal{E}}({\bf{z}}_{0})||\le \frac{\tau }{1-c}\overline{c}^{-m}||f_{\mathcal{E}}({\bf{z}}_{0})||
		\end{align}
		and
		\begin{align}\label{e2}
			\begin{split}
				||\boldsymbol{\epsilon}_{N-m_N+i}||^2 &\le\frac{1}{1-c} ||\boldsymbol{\epsilon}_{N-m_N+i}||||f_{\mathcal{E}}({\bf{z}}_{N-m_N+i})||\\
				&\le\frac{\hat{\delta}\tau}{1-c}\overline{c}^{N-m_N+i}||f_{\mathcal{E}}({\bf{z}}_{0})||\\
				& \le\frac{\hat{\delta}\tau}{1-c}\overline{c}^{N-m}||f_{\mathcal{E}}({\bf{z}}_{0})||,
			\end{split}
		\end{align}
		together with $\sum\limits_{i=0}^{m_N}|\alpha_i^{(N)}|\le \kappa_{\boldsymbol{\alpha}}$, the inequalities \eqref{addeq1} and \eqref{e1},  we imply that
		\begin{equation}\label{addeq2}
			||\sum\limits_{i=0}^{m_{N}}\alpha_{i}^{(N)}\varOmega_{N-m_N+i}||\le \frac{\kappa_{\boldsymbol{\alpha}}\gamma \hat{\delta}\tau}{2(1-c)}\overline{c}^{N-m}||f_{\mathcal{E}}({\bf{z}}_{0})||\le \frac{\kappa_{\boldsymbol{\alpha}}\gamma \hat{\delta}\tau}{2(1-c)}\overline{c}^{-m}||f_{\mathcal{E}}({\bf{z}}_{0})||
		\end{equation}
		and
		\begin{equation}\label{addeq3}
			||\sum\limits_{i=0}^{m_{N}}\alpha_{i}^{(N)}\nabla g_{\mathcal{E}}({\bf{z}}^{*})\boldsymbol{\epsilon}_{N-m_N+i}||\le \frac{\kappa_{\boldsymbol{\alpha}}\tau c}{1-c}\overline{c}^{-m}||f_{\mathcal{E}}({\bf{z}}_{0})||.
		\end{equation}
		From \eqref{addeq01},  we obtain
		\begin{equation*}
			{\hat{\boldsymbol{\epsilon}}_{N+1}}=\sum\limits_{i=0}^{m_{N}}\alpha_{i}^{(N)}\nabla g_{\mathcal{E}}({\bf{z}}^{*})\boldsymbol{\epsilon}_{N-m_N+i}+\sum\limits_{i=0}^{m_{N}}\alpha_{i}^{(N)}\varOmega_{N-m_N+i},
		\end{equation*}
		combing with \eqref{addeq2}, \eqref{addeq3}  and the condition \eqref{e3} yields
		\begin{align}\label{addeq4}
			||\hat{\boldsymbol{\epsilon}}_{N+1}||\le \frac{\kappa_{\boldsymbol{\alpha}}\tau(2c+\gamma\hat{\delta})}{2(1-c)}\overline{c}^{-m}||f_{\mathcal{E}}({\bf{z}}_{0})||\le \hat{\delta},
		\end{align}
		and hence ${\bf{y}}_{N+1}\in \mathbb{U}_{\hat{\delta}}$. Because ${\boldsymbol{\mu}_{N+1}}\in \mathbb{U}_{\hat{\delta}}$ by Lemma \ref{map}, it is found that ${\bf{z}}_{N+1}\in \mathbb{U}_{\hat{\delta}}.$
		Therefore, by Lemma \ref{lemain}, we can give the Taylor's formula of $f_{\mathcal{E}}({\bf{z}})$ at the true solution ${\bf{z}}^{*}$ as follows.
		\begin{equation}\label{add}
			f_{\mathcal{E}}({\bf{z}}_{N+1})=(\nabla g_{\mathcal{E}}({\bf{z}}^{*})-I){\boldsymbol{\epsilon}_{N+1}}+\varOmega_{N+1},
		\end{equation}
		with $||\varOmega_{N+1}||\le \frac{\gamma}{2}||\boldsymbol{\epsilon}_{N+1}||^2$.
		This combing with \eqref{addeq01} and the fact $\nabla g_{\mathcal{E}}({\bf{z}}^{*})(\nabla g_{\mathcal{E}}({\bf{z}}^{*})-I)=(\nabla g_{\mathcal{E}}({\bf{z}}^{*})-I)\nabla g_{\mathcal{E}}({\bf{z}}^{*})$ yields
		
		\begin{align}\label{allq}
			\begin{split}
				f_{\mathcal{E}}({\bf{z}}_{N+1})=&(\nabla g_{\mathcal{E}}({\bf{z}}^{*})-I)\big(\theta_k\hat{\boldsymbol{\epsilon}}_{N+1}+(1-\theta_k)( g_{\mathcal{E}}({\bf{z}}_N)-{\bf{z}}^{*})\big)+\varOmega_{N+1}\\
				=&\theta_k \nabla g_{\mathcal{E}}({\bf{z}}^{*})\sum\limits_{i=0}^{m_N}\alpha_{i}^{(N)}(\nabla g_{\mathcal{E}}({\bf{z}}^{*})-I){\boldsymbol{\epsilon}}_{N-m_N+i}+\theta_k(\nabla g_{\mathcal{E}}({\bf{z}}^{*})-I)\sum\limits_{i=0}^{m_{N}}\alpha_{i}^{(N)}\varOmega_{N-m_N+i}\\
				&+(1-\theta_k)(\nabla g_{\mathcal{E}}({\bf{z}}^{*})-I)(g_{\mathcal{E}}({\bf{z}}_N)-{\bf{z}}^{*})+\varOmega_{N+1}.
			\end{split}
		\end{align}
		By \eqref{addeq1}-\eqref{addeq2}, \eqref{add} the inductive assumption and the fact $||\sum\limits_{i=0}^{m_N}\alpha_{i}^{(N)}f_{\mathcal{E}}({\bf{z}}_{N-m_N+i})||\le ||f_{\mathcal{E}}({\bf{z}}_{N})||$, it is obtained that
		\begin{align}\label{allq1}
			\begin{split}
				& ||\nabla g_{\mathcal{E}}({\bf{z}}^{*})\sum\limits_{i=0}^{m_N}\alpha_{i}^{(N)}(\nabla g_{\mathcal{E}}({\bf{z}}^{*})-I){\boldsymbol{\epsilon}}_{N-m_N+i}+(\nabla g_{\mathcal{E}}({\bf{z}}^{*})-I)\sum\limits_{i=0}^{m_{N}}\alpha_{i}^{(N)}\varOmega_{N-m_N+i}||\\
				=&||\nabla g_{\mathcal{E}}({\bf{z}}^{*})\sum\limits_{i=0}^{m_N}\alpha_{i}^{(N)}\big(f_{\mathcal{E}}({\bf{z}}_{N-m_N+i})-\varOmega_{N-m_N+i}\big)+(\nabla g_{\mathcal{E}}({\bf{z}}^{*})-I)\sum\limits_{i=0}^{m_{N}}\alpha_{i}^{(N)}\varOmega_{N-m_N+i}||\\
				=&||\nabla g_{\mathcal{E}}({\bf{z}}^{*})\sum\limits_{i=0}^{m_N}\alpha_{i}^{(N)}f_{\mathcal{E}}({\bf{z}}_{N-m_N+i})-\sum\limits_{i=0}^{m_{N}}\alpha_{i}^{(N)}\varOmega_{N-m_N+i}||\\
				\le&c||f_{\mathcal{E}}({\bf{z}}_{N})||+||\sum\limits_{i=0}^{m_{N}}\alpha_{i}^{(N)}\varOmega_{N-m_N+i}||\\
				\le&(\frac{c}{\overline{c}}+\frac{\kappa_{\boldsymbol{\alpha}}\gamma \hat{\delta}}{2(1-c)}\overline{c}^{-m-1})\tau\overline{c}^{N+1}||f_{\mathcal{E}}({\bf{z}}_{0})||
			\end{split}
		\end{align}
		and
		\begin{align}\label{allq2}
			\begin{split}
				|| (\nabla g_{\mathcal{E}}({\bf{z}}^{*})-I)(g_{\mathcal{E}}({\bf{z}}_N)-{\bf{z}}^{*})||
				=&||(\nabla g_{\mathcal{E}}({\bf{z}}^{*})-I)f_{\mathcal{E}}({\bf{z}}_N)+(\nabla g_{\mathcal{E}}({\bf{z}}^{*})-I)
				\boldsymbol{\epsilon}_{N}||\\
				\le&||\nabla g_{\mathcal{E}}({\bf{z}}^{*})f_{\mathcal{E}}({\bf{z}}_N)||+||\varOmega_{N}||\\
				\le&(\frac{c}{\overline{c}}+\frac{\gamma\hat{\delta}}{2(1-c)}\overline{c}^{-m-1})\tau\overline{c}^{N+1}||f_{\mathcal{E}}({\bf{z}}_{0})||.
			\end{split}
		\end{align}	
		Note that $||\varOmega_{N+1}||\le \frac{\gamma\hat{\delta}}{2(1-c)}||f_{\mathcal{E}}({\bf{z}}_{N+1})||$. By \eqref{allq}-\eqref{allq2}{,} the hypothesis \eqref{e4} and the fact $\kappa_{\boldsymbol{\alpha}}\ge 1$, it is not {difficult} to get that
		\begin{align}\label{allq3}
			\begin{split}
				||f_{\mathcal{E}}({\bf{z}}_{N+1})||&\le \theta_k\frac{\frac{c}{\overline{c}}+\frac{\kappa_{\boldsymbol{\alpha}}\gamma \hat{\delta}}{2(1-c)}\overline{c}^{-m-1}}{1-\frac{\gamma\hat{\delta}}{2(1-c)}}\tau\overline{c}^{N+1}||f_{\mathcal{E}}({\bf{z}}_{0})||+(1-\theta_k)\frac{\frac{c}{\overline{c}}+\frac{\gamma\hat{\delta}}{2(1-c)}\overline{c}^{-m-1}}{1-\frac{\gamma\hat{\delta}}{2(1-c)}}\tau\overline{c}^{N+1}||f_{\mathcal{E}}({\bf{z}}_{0})||\\
				&\le \tau\overline{c}^{N+1}||f_{\mathcal{E}}({\bf{z}}_{0})||.
			\end{split}
		\end{align}
		The proof is completed.
	\end{proof}
	From Lemma \ref{lemain} and Theorem \ref{add0}, we can get the following corollary immediately.
	\begin{corollary}
		Under the assumption of Lemma \ref{map}, for any initial vector ${\bf{z}}_{0}\in \mathbb{I}\cap \mathbb{U}_{\delta}$, there exists $\mathbb{U}_{\hat{\delta}}\subseteq \mathbb{I}\cap \mathbb{U}_{\delta}$ with small enough $\hat{\delta}$ such that
		\begin{equation}\label{fe2}
			||\boldsymbol{\epsilon}_k||\le\tau^2 \overline {c}^{k}||\boldsymbol{\epsilon}_0||,
		\end{equation}
		where $\tau$ and $\overline{c}$ {are} given in Theorem \ref{add0}.
	\end{corollary}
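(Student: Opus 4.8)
The plan is to read the bound off directly from Theorem~\ref{add0} and the two-sided residual estimate \eqref{eq2} of Lemma~\ref{lemain}; no new construction is needed and the neighborhood $\mathbb{U}_{\hat{\delta}}$ can be taken to be exactly the one produced by Theorem~\ref{add0}.

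First I would fix $\hat{\delta}$ small enough that all hypotheses of Theorem~\ref{add0} are satisfied, so that for the given ${\bf{z}}_{0}\in \mathbb{U}_{\hat{\delta}}\subseteq \mathbb{I}\cap \mathbb{U}_{\delta}$ we have ${\bf{z}}_k\in \mathbb{U}_{\hat{\delta}}$ for all $k$, and by \eqref{fe},
\begin{equation*}
\|f_{\mathcal{E}}({\bf{z}}_k)\|\le \tau\,\overline{c}^{\,k}\,\|f_{\mathcal{E}}({\bf{z}}_0)\|,\qquad k\ge 1 .
\end{equation*}
Then, since both ${\bf{z}}_0$ and ${\bf{z}}_k$ lie in $\mathbb{U}_{\hat{\delta}}\subseteq \mathbb{U}_{\delta}$, the estimate \eqref{eq2} of Lemma~\ref{lemain} is available at both points: applying its left inequality at ${\bf{x}}={\bf{z}}_k$ gives $(1-c)\|\boldsymbol{\epsilon}_k\|\le \|f_{\mathcal{E}}({\bf{z}}_k)\|$, and its right inequality at ${\bf{x}}={\bf{z}}_0$ gives $\|f_{\mathcal{E}}({\bf{z}}_0)\|\le (1+c)\|\boldsymbol{\epsilon}_0\|$. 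Chaining these with the residual decay above yields
\begin{equation*}
\|\boldsymbol{\epsilon}_k\|\le \frac{1}{1-c}\|f_{\mathcal{E}}({\bf{z}}_k)\|\le \frac{\tau}{1-c}\,\overline{c}^{\,k}\,\|f_{\mathcal{E}}({\bf{z}}_0)\|\le \frac{(1+c)\tau}{1-c}\,\overline{c}^{\,k}\,\|\boldsymbol{\epsilon}_0\| ,
\end{equation*}
and since $\tau=\frac{1+c}{1-c}$ the prefactor $\frac{(1+c)\tau}{1-c}$ equals $\tau^{2}$, which is precisely \eqref{fe2}. The case $k=0$ is immediate because $\tau^{2}\ge 1$ for $c\in[0,1)$.

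There is essentially no obstacle here; the only point to keep honest is that Lemma~\ref{lemain}'s two-sided inequality is being used simultaneously at the starting point ${\bf{z}}_0$ and at the iterate ${\bf{z}}_k$, which is legitimate exactly because Theorem~\ref{add0} has already certified ${\bf{z}}_k\in \mathbb{U}_{\hat{\delta}}$ for every $k$ under a single choice of $\hat{\delta}$, so we never need to shrink the neighborhood again.
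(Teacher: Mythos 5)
Your proposal is correct and is exactly the argument the paper intends: it derives the corollary "immediately" from \eqref{fe} of Theorem \ref{add0} together with the two-sided bound \eqref{eq2} of Lemma \ref{lemain}, chaining $(1-c)\|\boldsymbol{\epsilon}_k\|\le\|f_{\mathcal{E}}({\bf{z}}_k)\|$ and $\|f_{\mathcal{E}}({\bf{z}}_0)\|\le(1+c)\|\boldsymbol{\epsilon}_0\|$ so that the prefactor collapses to $\tau^2$. Your remark that Theorem \ref{add0} already certifies ${\bf{z}}_k\in\mathbb{U}_{\hat{\delta}}$ for all $k$, so \eqref{eq2} applies at both points without further shrinking of the neighborhood, is the only subtlety and you handle it correctly.
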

	\section{Numerical examples}\label{numex}
	All the numerical experiments will be done in MATLAB R2018a with the configuration: Intel(R) Core(TM) i9-11900K CPU 3.50GHz and 64.00G RAM.
	For all numerical examples, we use `CPU(s)', `IT' and `RES' {to} denote the running time, the iterative step and the norm of a residual vector (i.e., $\textrm{RES}=||\mathcal {A}{\bf{z}}^{l-1}-{\bf{b}}||$) of the tested algorithms, respectively. We set the maximum iterative number as $1000$, $\kappa_{\boldsymbol{\alpha}}=1000$ and the stopping criteria as $||\mathcal {A}{\bf{z}}^{l-1}-{\bf{b}}|| < \varepsilon$ with $\varepsilon=10^{-11}$.

 Let $\mathcal{A}=\mathcal{E}-\mathcal{F}$, $\mathcal{D}=D{\mathcal{I}_{l}}$ and $\mathcal{L}=L{\mathcal{I}_{l}}$, where $D$ and $-L$ are the diagonal part and the strictly lower triangle part of $M(\mathcal{A})$, respectively. We give several illustrations {with different algorithms listed in} Table \ref{tab1}, where `-' means that it is unavailable.
	
	\begin{table}[htbp]
		\centering
		\caption{Abbreviations and splitting of the tested methods}
		\scalebox{0.6}{
			\begin{tabular}{lll}
				\hline
				Methods & $\mathcal{E}$ & Abbreviations \\
				\hline
				Jacobian-Anderson type method & $\mathcal{D}$ & Jacobian-RAA({$m$}) \\
				GS-Anderson type method & $\mathcal{D}-\mathcal{L}$ & GS-RAA({$m$}) \\
				SOR-Anderson type method & $\frac{1}{\omega}(\mathcal{D}-\omega\mathcal{L}$) & SOR-RAA({$m$}) \\
                Preconditioned method in \cite{cui}&     $\mathcal{D}-\mathcal{L}$    & P$_{max}$  \\
                Preconditioned SOR-type method in \cite{liuc}&     $\frac{1}{\omega}(\mathcal{D}-\omega\mathcal{L}$)    & P$_{\beta}$SOR \\
                Preconditioned SOR-type method in \cite{com}&     $\frac{1}{\omega}(\mathcal{D}-\omega\mathcal{L}$)    & P$_{\alpha}$SOR \\
                Preconditioned AOR-type method in \cite{Cy}&     $\frac{1}{\omega}(\mathcal{D}-r\mathcal{L}$)    & PAOR \\
                Accelerated modified dynamical system method in \cite{wang5}&     -    & AMDS-POWER \\
                Newton's method in \cite{wei2} &     -    & Newton \\
				\hline
		\end{tabular}}%
		\label{tab1}%
	\end{table}%
	\begin{remark}
		If $m=0$, it is noted that the numerical results are given by the TSM.
	\end{remark}
	

We search the optimal parameter $\theta$ from $0.1$ to $1$ in the interval of $0.1$, the parameter $\omega$ from $1$ to $2$ in the interval of $0.1$ for all the SOR-type methods, the {parameters} $\alpha$ and $\beta$ from $0.1$ to $2$ in the interval of $0.1$, the parameter $\tilde{\gamma}$ from $0.1$ to $3$ in the interval of $0.1$, the parameter $\tilde{\beta}$ from $-0.4$ to $0$ in the interval of $0.1$ and the parameter $r$ from $0$ to $1$ in the interval of $0.1$.

	\begin{example}\label{ex1}(\cite{wei2})
		Taking order-$3$  dimension $n$ nonsingular $\mathcal {M}$-tensors $\mathcal {A}=s\mathcal {I}-\mathcal {B}$, where $\mathcal {B}$ is generated randomly by MATLAB, and $s = (1 + \varepsilon )\mathop {\max }\limits_{i = 1,2,...,n} {(\mathcal {B}{{\bf{e}}^2})_i},~\varepsilon  > 0$, and $\varepsilon  =1$.
	\end{example}
	%
	
	\begin{example}\label{ex4}
		Let $\mathcal {A} \in \mathbb{R}^{[3,n]}$. Then entries $a_{111}=a_{nnn}=8.$ For $i=2,3,\ldots,n-1$,  $a_{iii}=8$, $a_{i+1ii}=a_{ii-1i}=a_{iii+1}=-1/3$.
	\end{example}

\begin{example}\label{ex5} (\cite{xie})
		Let $s=n^2$ and $\mathcal{A}=s\mathcal{I}-\mathcal{B}\in \mathbb{R}^{[3,n]}$ with $b_{ijk}=|\sin (i+j+k)|$.
	\end{example}

In numerical experiments, let $\theta=\theta_k$ for all $k$. We set the right-hand side ${\bf{b}}={\bf{e}}$, where  ${\bf{e}}=({1,1,...,1})^{T}$. Furthermore, we take the initial value ${\bf{z}}_0={\bf{e}}$ and ${\bf{z}}_0=\frac{1}{n}{\bf{e}}$ for Examples \ref{ex1}-\ref{ex4} and \ref{ex5}, respectively.

	\begin{table}[htbp]
		\centering
		\caption{Numerical comparison of the proposed algorithms with the variation of $m$.}
		\scalebox{0.6}{
			\begin{tabular}{rcccccccccc}
				\hline
				&       &       & Jacobian-RAA($m$) &       &       & GS-RAA($m$) &       &       & SOR-RAA($m$) &  \\
				\hline
				& $m$     & CPU(s)   & IT    & RES   & CPU(s)   & IT    & RES   & CPU(s)   & IT    & RES \\
				\hline
				Example \ref{ex1}
                & 0     & 0.1442971 & 39    & 9.38E-12 & 0.1424690 & 39    & 8.93E-12 & 0.0929389 & 25    & 5.10E-12 \\
                & 1     & 0.0357574 & 9     & 6.57E-16 & 0.0350616 & 9     & 6.35E-16 & 0.0340594 & 9     & 6.35E-16 \\
                & 2     & \textbf{0.0317609} & 8     & 1.63E-12 & \textbf{0.0300341} & 8     & 1.99E-12 & \textbf{0.0289184} & 8     & 1.99E-12 \\
                & 3     & 0.0441601 & 11    & 6.16E-13 & 0.0389104 & 10    & 1.91E-12 & 0.0385751 & 10    & 1.91E-12 \\
                & 4     & 0.0403306 & 11    & 6.38E-12 & 0.0396004 & 11    & 2.67E-12 & 0.0396004 & 11    & 7.11E-14 \\
				\hline

				Example \ref{ex4}
                & 0     & 0.0552742 & 14    & 2.36E-12 & 0.0449658 & 12    & 1.95E-12 & 0.0445933 & 12    & 1.42E-12 \\
                & 1     & 0.0378777 & 10    & 7.77E-12 & 0.0332321 & 9     & 4.01E-12 & 0.0341879 & 9     & 2.69E-12 \\
                & 2     & 0.0374533 & 10    & 9.40E-12 & 0.0298825 & 8     & 6.82E-12 & 0.0297657 & 8     & 8.11E-12 \\
                & 3     & \textbf{0.0373813} & 10    & 2.07E-12 & \textbf{0.0296058} & 8     & 7.45E-12 & \textbf{0.0291958} & 8     & 6.62E-12 \\
                & 4     & 0.0386835 & 10    & 1.94E-12 & 0.0303037 & 8     & 7.76E-12 & 0.0295858 & 8     & 5.54E-12 \\
				\hline

                Example \ref{ex5}
                & 0     & 0.1466787 & 38    & 7.90E-12 & 0.1415589 & 38    & 7.63E-12 & 0.0794291 & 22    & 6.27E-12 \\
                & 1     & 0.0224974 & 6     & 1.78E-16 & 0.0233235 & 6     & 1.75E-16 & 0.0221309 & 6     & 1.75E-16 \\
                & 2     & 0.0228135 & 6     & 7.50E-13 & 0.0232898 & 6     & 3.31E-13 & 0.0228285 & 6     & 3.31E-13 \\
                & 3     & \textbf{0.0223159} & 6     & 1.88E-12 & \textbf{0.0223231} & 6     & 1.95E-12 & \textbf{0.0220114} & 6     & 1.95E-12 \\
                & 4     & 0.0236190 & 6     & 5.68E-12 & 0.0229646 & 6     & 4.58E-12 & 0.0225216 & 6     & 4.58E-12 \\
				\hline

		\end{tabular}}%
		\label{data}%
	\end{table}%

	\begin{table}[htbp]
		\centering
		\caption{The optimal parameters of the proposed algorithms}
		\scalebox{0.5}{
			\begin{tabular}{lcccc}
				\hline
				&$m$	& Jacobian-RAA($m$) & GS-RAA($m$) & SOR-RAA($m$)   \\
				\hline
				&	& $\theta$ & $\theta$ & ($\theta,~\omega$)   \\
				\hline
				Example \ref{ex1}&	0    & -   & -   & (-,~1.4) \\
				&	1     & 1     & 1     & (1,~1) \\
				&	2     & 1     & 1     & (1,~1) \\
				&	3     & 1   & 0.9     & (0.9,~1) \\
				&	4     & 0.9     & 0.9   & (1,~1) \\
				\hline
				Example \ref{ex4}	&0     & -   & -   & (-,~1.1) \\
				&	1     & 0.7   & 0.5   & (0.8,~1) \\
				&	2     & 0.5   & 0.7   & (0.9,~1) \\
				&	3     & 0.8     & 0.6   & (0.9 ,~1.1) \\
				&	4     & 0.9   & 0.6   & (0.8,~1.1) \\
				\hline
                Example \ref{ex5}	&0     & -   & -   & (-,~1.5) \\
				&	1     & 1   & 1   & (1,~1) \\
				&	2     & 1   & 1   & (1,~1) \\
				&	3     & 1   & 1   & (1,~1) \\
				&	4     & 1   & 1   & (1,~1) \\
				\hline
		\end{tabular}}%
		\label{para}%
	\end{table}%

It is easy to check that $\mathcal{A}$ in Examples \ref{ex1}-\ref{ex5} is a strong $\mathcal{M}$-tensor and the corresponding multilinear systems \eqref{ax=b} have a unique solution. For $n=200$, we report numerical results in Table \ref{data} and the optimal parameters are listed in Table \ref{para}, where `-' means that it is unavailable. It is seen that if $m=2$ and $m=3$, the SOR-Anderson type method performs best for Examples \ref{ex1} and \ref{ex4}-\ref{ex5}, respectively. The proposed algorithm is valid for a just small $m$ and increasing the value of $m$ does not improve the {algorithm any better}. This is a good numerical behavior because a small value of $m$ means that there is not much computation involved for solving the least square problem \eqref{sq}.


%
\begin{table}
		\centering
		\caption{Numerical comparison with different algorithms.}%
		\scalebox{0.36}{
			\begin{tabular}{llrccrccrccrccrccrcc}
				\hline
				&  &       & $n=50$ &       &      & $n=100$ &       &       & $n=300$ &       &       & $n=500$ &       &       & $n=800$ &       &       & $n=1000$ &  \\
				\hline
				& & CPU   & IT    & RES   & CPU   & IT    & RES   & CPU   & IT    & RES   & CPU   & IT    & RES   & CPU   & IT    & RES   & CPU   & IT    & RES \\
				\hline		
    Example \ref{ex1} & Jacobian-RAA(2) & 0.0014176 & 8     & 1.52E-13 & 0.0036408 & 9     & 1.42E-16 & 0.0927113 & 8     & 1.76E-12 & 0.4460543 & 8     & 5.68E-12 & 1.7014795 & 8     & 2.82E-13 & 3.2124102 & 8     & 8.72E-13 \\
    &  GS-RAA(2) & \textbf{0.0009017} & 8     & 5.82E-12 & 0.0033494 & 8     & 1.70E-13 & 0.0918285 & 8     & 3.73E-12 & 0.4346092 & 8     & 7.95E-12 & 1.6860256 & 8     & 1.19E-12 & 3.2483556 & 8     & 1.23E-12 \\
    & SOR-RAA(2) & 0.0009062 & 8     & 5.82E-12 & \textbf{0.0031615} & 8     & 1.70E-13 & \textbf{0.0918195} & 8     & 3.73E-12 & \textbf{0.4238187} & 8     & 7.95E-12 & \textbf{1.6757426} & 8     & 1.19E-12 & \textbf{3.1609756} & 8     & 1.23E-12 \\
    & P$_{max}$\cite{cui} & 0.0029555 & 37    & 8.44E-12 & 0.0197105 & 39    & 5.05E-12 & 0.5692834 & 40    & 6.05E-12 & 2.5258695 & 40    & 8.50E-12 & 10.4789663 & 41    & 5.76E-12 & 42.345704 & 41    & 6.58E-12 \\
    & Newton\cite{wei2} & 0.0040455 & 9     & 6.09E-15 & 0.0410814 & 10    & 1.52E-15 & 1.4280160 & 11    & 3.53E-12 & 9.6178407 & 12    & 3.39E-14 & 61.0195736 & 13    & 5.00E-17 & 146.3799044 & 13    & 1.14E-14 \\
    & P$_{\beta}$SOR\cite{liuc} & 0.0024676 & 26    & 4.15E-12 & 0.0187555 & 25    & 7.21E-12 & 0.3697548 & 25    & 4.00E-12 & 1.6220897 & 24    & 8.22E-12 & 6.8968100 & 24    & 6.87E-12 & 22.4964405 & 24    & 6.64E-12 \\
    & P$_{\alpha}$SOR\cite{com} & 0.0025883 & 26    & 4.14E-12 & 0.0197529 & 25    & 7.22E-12 & 0.3999717 & 25    & 4.00E-12 & 1.7383071 & 24    & 8.22E-12 & 7.3920309 & 24    & 6.87E-12 & 21.7276480 & 24    & 6.64E-12 \\
    & PAOR\cite{Cy} & 0.0022599 & 18    & 1.65E-12 & 0.0103910 & 19    & 8.71E-13 & 0.3220167 & 19    & 5.94E-12 & 1.4678421 & 20    & 1.57E-12 & 6.1186158 & 20    & 2.82E-12 & 22.0594513 & 20    & 3.49E-12 \\
    & AMDS-POWER\cite{wang5} & 0.0271680 & 11    & 4.87E-13 & 0.0825517 & 12    & 1.99E-13 & 1.3996524 & 13    & 8.39E-13 & 9.2789842 & 14    & 1.03E-13 & 55.7186370 & 14    & 1.78E-12 & 137.5516590 & 14    & 6.67E-12 \\
				\hline
   Example \ref{ex4} & Jacobian-RAA(3) & 0.0016846 & 10    & 2.08E-12 & 0.0045141 & 10    & 2.10E-12 & 0.1172716 & 10    & 2.09E-12 & 0.5235989 & 10    & 2.13E-12 & 1.9884066 & 10    & 2.17E-12 & 3.9926114 & 10    & 2.18E-12 \\
   & GS-RAA(3) & 0.0007408 & 8     & 8.49E-12 & 0.0038454 & 8     & 7.82E-12 & 0.0940870 & 8     & 7.35E-12 & \textbf{0.4061717} & 8     & 7.29E-12 & 1.5978363 & 8     & 7.27E-12 & 3.1481464 & 8     & 7.28E-12 \\
   & SOR-RAA(3) & \textbf{0.0006845} & 8     & 6.31E-12 & \textbf{0.0031653} & 8     & 6.53E-12 & \textbf{0.0933679} & 8     & 6.62E-12 & 0.4293982 & 8     & 6.60E-12 & \textbf{1.5916488} & 8     & 6.54E-12 & \textbf{3.1476961} & 8     & 6.51E-12 \\
   & P$_{max}$\cite{cui} & 0.0007588 & 12    & 9.23E-13 & 0.0039853 & 12    & 1.35E-12 & 0.1420790 & 12    & 2.40E-12 & 0.6163451 & 12    & 3.11E-12 & 2.5129816 & 12    & 3.95E-12 & 4.7925411 & 12    & 4.42E-12 \\
   & Newton\cite{wei2} & 0.0017890 & 6     & 2.33E-16 & 0.0233479 & 6     & 3.76E-16 & 0.7671930 & 6     & 6.99E-16 & 4.7142129 & 6     & 9.14E-16 & 26.4334981 & 6     & 1.16E-15 & 62.7697815 & 6     & 1.30E-15 \\
   & P$_{\beta}$SOR\cite{liuc} & 0.0006917 & 12    & 1.42E-12 & 0.0042315 & 12    & 1.42E-12 & 0.1341507 & 12    & 1.42E-12 & 0.6382303 & 12    & 1.42E-12 & 2.4254230 & 12    & 1.42E-12 & 4.8201556 & 12    & 1.42E-12 \\
   & P$_{\alpha}$SOR\cite{com} & 0.0006921 & 12    & 1.42E-12 & 0.0031779 & 12    & 1.42E-12 & 0.1348621 & 12    & 1.42E-12 & 0.6265198 & 12    & 1.42E-12 & 2.4286392 & 12    & 1.42E-12 & 4.6955130 & 12    & 1.42E-12 \\
    & PAOR\cite{Cy}  & 0.0007791 & 12    & 4.55E-12 & 0.0038869 & 12    & 4.70E-12 & 0.1317069 & 12    & 5.26E-12 & 0.6090724 & 12    & 5.77E-12 & 2.4233616 & 12    & 6.45E-12 & 4.8096263 & 12    & 6.87E-12 \\
    & AMDS-POWER\cite{wang5} & 0.0165788 & 10    & 4.42E-12 & 0.0564043 & 10    & 6.49E-12 & 1.1199035 & 11    & 5.07E-13 & 7.0541374 & 11    & 6.58E-13 & 41.1650175 & 11    & 8.34E-13 & 99.2613736 & 11    & 9.33E-13 \\
				\hline
   Example \ref{ex5} & Jacobian-RAA(3) & 0.0009405 & 7     & 6.21E-16 & 0.0027595 & 7     & 2.50E-15 & 0.0739537 & 6     & 1.88E-12 & 0.4335684 & 8     & 3.50E-14 & 1.4806536 & 7     & 9.27E-13 & 2.8143426 & 7     & 4.06E-14 \\
   & GS-RAA(3) & \textbf{0.0005526} & 6     & 1.07E-12 & 0.0029902 & 6     & 5.56E-13 & 0.0707300 & 6     & 1.26E-13 & \textbf{0.3015579} & 6     & 2.06E-13 & 1.3097623 & 6     & 9.68E-14 & 2.4746146 & 6     & 3.85E-14 \\
   & SOR-RAA(3) & 0.0005669 & 6     & 1.07E-12 & \textbf{0.0023638} & 6     & 5.56E-13 & \textbf{0.0703939} & 6     & 1.26E-13 & 0.3213598 & 6     & 2.06E-13 & \textbf{1.2090712} & 6     & 9.68E-14 & \textbf{2.4226491} & 6     & 3.85E-14 \\
   & P$_{max}$\cite{cui} & 0.0024514 & 42    & 8.85E-12 & 0.0141331 & 40    & 8.64E-12 & 0.5210884 & 37    & 6.63E-12 & 2.2904447 & 35    & 7.69E-12 & 8.7408552 & 33    & 9.42E-12 & 17.0999350 & 33    & 6.75E-12 \\
   & Newton\cite{wei2} & 0.0012694 & 4     & 2.29E-12 & 0.0159670 & 4     & 8.22E-13 & 0.5512742 & 4     & 1.56E-13 & 3.1687929 & 4     & 7.24E-14 & 18.6036020 & 4     & 3.57E-14 & 44.7079714 & 4     & 2.56E-14 \\
   & P$_{\beta}$SOR\cite{liuc} & 0.0019610 & 24    & 8.48E-12 & 0.0106446 & 23    & 7.80E-12 & 0.3252724 & 21    & 7.72E-12 & 1.4672593 & 20    & 8.02E-12 & 6.0325093 & 19    & 8.79E-12 & 12.0475225 & 19    & 6.31E-12 \\
   & P$_{\alpha}$SOR\cite{com} & 0.0016387 & 24    & 8.41E-12 & 0.0094868 & 23    & 7.79E-12 & 0.3351531 & 21    & 7.72E-12 & 1.5115867 & 20    & 8.02E-12 & 6.1476560 & 19    & 8.79E-12 & 12.3410751 & 19    & 6.31E-12 \\
   & PAOR\cite{Cy} & 0.0014030 & 19    & 8.61E-12 & 0.0096942 & 18    & 8.23E-12 & 0.2804358 & 17    & 4.03E-12 & 1.2505620 & 16    & 5.16E-12 & 4.9279153 & 15    & 7.08E-12 & 10.0301371 & 15    & 5.06E-12 \\
   & AMDS-POWER\cite{wang5} & 0.0067865 & 7     & 7.25E-14 & 0.0362239 & 6     & 4.90E-12 & 0.5476106 & 6     & 9.53E-13 & 3.4356464 & 6     & 4.43E-13 & 20.5875038 & 6     & 2.19E-13 & 49.6387163 & 6     & 1.57E-13 \\
				\hline
        \label{datacompare2}
		\end{tabular}}%
		
\end{table}

\begin{table}[htbp]
		\centering
		\caption{The optimal parameters of different algorithms.}
		\scalebox{0.65}{
			\begin{tabular}{lccccccc}
				\hline
				 & Jacobian-RAA($m$) & GS-RAA($m$) & SOR-RAA($m$) & P$_{\beta}$SOR\cite{liuc} & P$_{\alpha}$SOR\cite{com} & PAOR\cite{Cy}  & AMDS-POWER\cite{wang5} \\
				\hline
				& ($m,~\theta$) & ($m,~\theta$) & ($m,~\theta$,~$\omega$) & ($\beta,~\omega$) & ($\alpha,~\omega$) & ($\tilde{\gamma},~\tilde{\beta},~\omega,~r$) & $\hat{\beta}$ \\
				\hline
    Example \ref{ex1}   & (2,~1) & (2,~1) & (2,~1,~1) & (1.4,~1.4) & (0.7,~1.4) & (1.2,~-0.3,~1.1,~0.4) & 0.3 \\
    Example \ref{ex4}   & (3,~0.8) & (3,~0.6) & (3,~0.9,~1.1) & (0.9,~1.1) & (1.3,~1.1) & (1.2,~-0.1,~1.1,~0.8) & 0.26 \\
    Example \ref{ex5}   & (3,~1) & (3,~1) & (3,~1,1) & (0.9,~1.5) & (0.6,~1.5) & (1.6,~-0.3,~1.3,~0.3) & 0.3 \\
        \hline
		\end{tabular}}%
		\label{comallpara}%
	\end{table}%

\begin{figure}[htbp]
		\centering
		\subfigure[Example \ref{ex1}]{
			\begin{minipage}[t]{0.52\linewidth}
				\centering
				\includegraphics[scale=0.5]{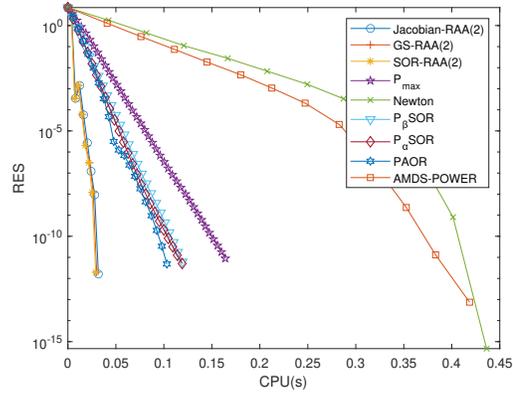}
			\end{minipage}%
		}%
	
		\subfigure[Example \ref{ex4}]{
			\begin{minipage}[t]{0.52\linewidth}
				\centering
				\includegraphics[scale=0.5]{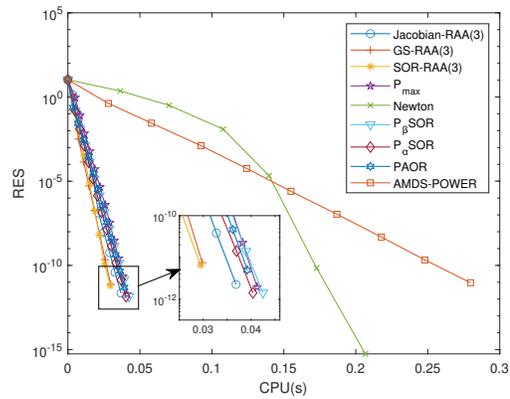}
			\end{minipage}%
		}%
	
        \subfigure[Example \ref{ex5}]{
			\begin{minipage}[t]{0.52\linewidth}
				\centering
				\includegraphics[scale=0.5]{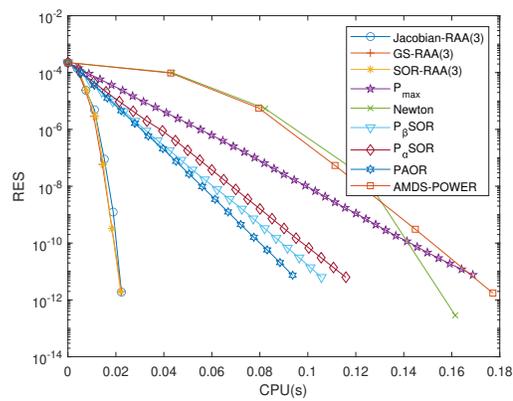}
			\end{minipage}%
		}%


		\centering
		\caption{The relationship between the residual and the running time with different algorithms.} \label{figcom}
	\end{figure}

    For different methods, the numerical results are shown in Table \ref{datacompare2} and the optimal parameters are listed in Table \ref{comallpara}. Besides, Figure \ref{figcom} is given to show the relationship between the norm of a residual vector and the running time. It is seen that the SOR-Anderson type method performs better in running times than other tested algorithms.
	\clearpage
	\section{Conclusion}
	The main contribution {of} this paper is aimed at proposing the restarted nonnegativity preserving tensor splitting methods by the relaxed Anderson acceleration for solving the multilinear systems with a strong $\mathcal{M}$-tensor and the local convergence analysis is also given.  From the numerical experiments, it is seen that the proposed methods with a suitable parameter perform better than the tensor splitting methods without acceleration. Besides, the increasing value of $m$ does not improve the given algorithm better, and the proposed algorithm is valid just for a small value of $m.$  However, how to find an optimal parameter $\theta_k$ is still an open problem, which is one of our future works.
	\\
	\textbf{Declarations}
	\\
	\textbf{Funding}
	The first author was supported in part by National Natural Science Foundation of China (No. 12101136), the Guangdong Basic and Applied Basic Research Foundations (No. 2023A1515011633), the Project of Science and Technology of Guangzhou (No. 2024A04J2056),  the Opening Project of Guangdong Province Key Laboratory of Computational Science at the Sun Yat sen University(No. 2021004), the Open Project of Key Laboratory, School of Mathematical Sciences, Chongqing Normal University (No. CSSXKFKTQ202002). The third author was funded by the Science and Technology Research Program of Chongqing Municipal Education Commission (Grant No. KJQN202100505), the Natural Science Foundation Project of Chongqing (Grant No.
	cstc2021jcyj-msxmX0195), and the Program of Chongqing Innovation Research Group Project in University (Grant No.
	CXQT19018).
	\\
	\textbf{Acknowledgments}
	The authors would like to thank the referees for their helpful comments and suggestions and Prof. Lei Du for sharing their work \cite{du} with us.
	\\
	\textbf{Conflict of interest}
	The authors declare that they have no conflict of interest.

\end{document}